\newcommand\la{\langle}
\newcommand\ra{\rangle}
\newcommand\eb{\mathbf{e}}
\newcommand\fb{\mathbf{f}}
\newcommand\ZZ{{\mathbb Z}}
\newcommand\FF{{\mathbb F}}
\newcommand\QQ{{\mathbb Q}}
\newcommand\RR{{\mathbb R}}
\newcommand\CC{{\mathbb C}}
\newcommand\KK{{\mathbb K}}
\newcommand\cR{{\mathscr R}}
\newcommand\SL{\operatorname{SL}}
\newcommand\Sp{\operatorname{Sp}}
\newcommand\aut{\operatorname{Aut}}
\renewcommand\dim{\operatorname{dim}}
\newcommand\diff{\operatorname{Diff}}
\newcommand\ssm{\smallsetminus}
\renewcommand{\hom}{\operatorname{Hom}}
\newcommand\Hcal{\mathcal{H}}
\newcommand\Bs{{\mathscr B}}
\newcommand\Gs{\mathscr{G}}
\newcommand\G{\Gamma}
\newcommand\cG{{\mathscr G}}
\newcommand\GL{\operatorname{GL}}
\newcommand\Iso{\operatorname{Iso}}
\newcommand\Hl{\operatorname{H}}
\newcommand\Res{\operatorname{Res}}
\newcommand\End{\operatorname{End}}
\newcommand\EU{\operatorname{EU}}
\newcommand\SU{\operatorname{SU}}
\newcommand\U{\operatorname{U}}
\newcommand\Orth{\operatorname{O}}
\newcommand\tr{\operatorname{tr}}
\newcommand\ru{\operatorname{R_u}}
\newcommand\rank{\operatorname{rk}}
\newtheorem{theorem}{Theorem}[section]
\newtheorem{corollary}[theorem]{Corollary}
\newtheorem{proposition}[theorem]{Proposition}
\newtheorem{lemma}[theorem]{Lemma}
\theoremstyle{definition}
\newtheorem{remark}[theorem]{Remark}
\theoremstyle{empty}
\begin{document}
\title{Arithmetic  representations of mapping class groups}

\author{Eduard Looijenga}\thanks{Part of the research for this paper was done when the author was supported by the Chinese National Science Foundation and by the Jump Trading Mathlab Research Fund}
\address{Mathematisch Instituut, Universiteit Utrecht (Nederland) and Mathematics Department, University of Chicago (USA)}
\email{e.j.n.looijenga@uu.nl}

\subjclass[2020]{Primary: 57K20, 57M12; Secondary: 11E39}
\keywords{Mapping class group, Arithmetic group}

\begin{abstract}
Let $S$ be a closed oriented surface and $G$ a finite group of orientation preserving automorphisms of $S$ whose orbit space has genus at least $2$. 
There is a  natural group homomorphism  from the $G$-centralizer in $\diff^+(S)$ to the  $G$-centralizer in $\Sp(H_1(S))$. We give a sufficient condition for its image to be a subgroup of finite index.
\end{abstract}

\maketitle
\section{Introduction and statement of the main result}

Let $S$ be a closed connected oriented surface of genus $\ge 2$. The group $\diff^+(S)$ of orientation preserving diffeomorphisms of $S$ acts on 
$\Hl_1(S)$  via its connected component group $\pi_0(\diff^+(S))$, known as the \emph{mapping class group} of $S$, and it is a classical fact 
that the image of this representation  is the full symplectic group  $\Sp(\Hl_1(S))$ of integral linear transformations which preserve the intersection form on $\Hl_1(S)$. 
This paper concerns an equivariant version, where it is assumed that we are given a finite subgroup $G\subset\diff^+(S)$. 
The centralizer $\diff^+(S)^G$ 
of $G$ in $\diff^+(S)$  lands under the above symplectic representation in $\Sp(\Hl_1(S))^G$ and the question we 
address here is how much smaller the image is. Besides its intrinsic interest, the answer has consequences for understanding the 
mapping class group of  the $G$-orbit space of $S$. We shall regard the latter as an orbifold surface and denote it by $S_G$;
the regular orbits then define an open subset $S_G^\circ\subset S_G$  with finite complement. This punctured surface $S_G^\circ$ has negative  Euler characteristic. The  image  of  $\diff^+(S)^G$ in the mapping class group of the punctured surface  $S_G^\circ$ is of finite index and  thus makes $\Sp(\Hl_1(S))$ a `virtual representation' of that mapping class group.

The work of  Putman-Wieland \cite{PW} relates our question to the Ivanov conjecture as follows. Let us say
that the $G$-action on $S$ has the \emph{Putman-Wieland property} if  $\diff^+(S)^G$ has  no  finite orbit in $\Hl_1(S)\ssm\{0\}$.  These authors prove that if that property holds for a given genus $h$ of 
$S_G$  (no matter what $S$ and $G$ are), then every finite index subgroup of a mapping class group of a connected  oriented  surface of finite type of genus $>h$ has zero first Betti number. The first part of our main result is about that property.
\begin{theorem}\label{thm:main}
 Let $S\to S_G$ be a $G$-cover as above. 
\begin{enumerate}
\item[(i)] If this cover is trivial  over a compact genus one subsurface of $S_G^\circ$ with connected boundary, then the action of $\diff^+(S)^G$ on $\Hl_1(S)$ has no nonzero finite orbits. 
\item[(ii)] If this cover is trivial  over a compact genus two subsurface of $S_G^\circ$
with connected boundary, then the image of $\diff^+(S)^G$ in  $\Sp(\Hl_1(S))^G$ is of finite index.  
\end{enumerate}
\end{theorem}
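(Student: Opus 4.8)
Both parts are obtained by locating a large ``diagonal'' subgroup in the image and then conjugating it around, using that, as recalled in the introduction, $\diff^+(S)^G$ maps onto a finite-index subgroup $\Lambda_0$ of the mapping class group of $S_G^\circ$. Let $\Sigma\subset S_G^\circ$ be the given compact subsurface, of genus $g$ ($g=1$ in (i), $g=2$ in (ii)), with connected boundary, over which the cover is trivial; its preimage $\widetilde\Sigma\subset S$ is then $G$-equivariantly a disjoint union $G\times\Sigma$ of $|G|$ copies of $\Sigma$ permuted simply transitively by $G$. Acting by a diffeomorphism of $\Sigma$ that is the identity near $\partial\Sigma$ on all copies at once, extended by the identity, is $G$-equivariant; this gives a homomorphism $\Mod(\Sigma,\partial\Sigma)\to\pi_0(\diff^+(S)^G)$, and, since the boundary twist acts trivially on homology, composing with the symplectic representation yields a homomorphism $\Sp_{2g}(\ZZ)\to\Sp(H_1(S))^G$ whose image $\Gamma_\Sigma$ lies in the image $\Gamma$ of $\diff^+(S)^G$. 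As $\Sigma$ has a single boundary circle its intersection form is unimodular and the $|G|$ copies are mutually orthogonal, so $H_1(\widetilde\Sigma)\to H_1(S)$ is injective with nondegenerate image $W\cong\ZZ[G]^{\oplus 2g}$; hence $H_1(S;\QQ)=W\oplus W^\perp$, with $\Gamma_\Sigma$ acting on $W$ through $\mathrm{id}_{\QQ[G]}\otimes(\text{standard representation of }\Sp_{2g})$ and, since $\Gamma_\Sigma$ is supported on $\widetilde\Sigma$ (so it acts trivially on $H_1(S)/W$, into which $W^\perp$ injects), trivially on $W^\perp$. For $\psi\in\diff^+(S)^G$ the conjugate $\psi\Gamma_\Sigma\psi^{-1}\subseteq\Gamma$ has the same description relative to $\psi(W)$, and by the change-of-coordinates principle, $\Lambda_0$ being of finite index, the subsurfaces $\psi(\Sigma)$ exhaust a finite-index sub-collection of all genus-$g$, one-boundary subsurfaces of $S_G^\circ$.

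\textbf{Part (i).}
Suppose $v\in H_1(S;\QQ)\setminus\{0\}$ had finite $\Gamma$-orbit. Then a finite-index subgroup of $\Gamma$ fixes $v$, so a finite-index subgroup $\Lambda\le\SL_2(\ZZ)=\Sp_2(\ZZ)$ fixes the $W$-component of $v$; but $\Lambda$ is not virtually abelian and thus fixes no nonzero vector of $\QQ^2$, hence none of $W\cong(\QQ^2)^{\oplus|G|}$, so $v\in W^\perp$. Repeating this with $\psi(\Sigma)$ in place of $\Sigma$ gives $v\in\psi(W)^\perp$ for all $\psi\in\diff^+(S)^G$, i.e. $v\perp\sum_\psi\psi(W)$. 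It therefore suffices to show that the $\diff^+(S)^G$-translates of $W$ span $H_1(S;\QQ)$. Two facts enter: for any simple closed curve $c\subset S_G^\circ$ an appropriate power of $T_c$ is liftable and lifts to the product of the Dehn twists along the components of $\pi^{-1}(c)$, an element of $\diff^+(S)^G$; and, combining this with the finite index of $\Lambda_0$ and change of coordinates, these curves together with the curves inside the translates $\psi(\Sigma)$ realise homology classes densely enough. One concludes $\sum_\psi\psi(W)=H_1(S;\QQ)$, whence $v=0$.

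\textbf{Part (ii).}
Now $g=2$, so $W\cong\ZZ[G]^{\oplus4}$ and the splitting $H_1(S;\QQ)=W\oplus W^\perp$ already forces every $G$-isotypic multiplicity of $H_1(S;\QQ)$ to be at least four times its multiplicity in $\QQ[G]$; writing $\Sp(H_1(S))^G=\prod_\chi\Sp(H_\chi)^G$ for the isotypic decomposition, each conjugate $\psi\Gamma_\Sigma\psi^{-1}$ meets $\Sp(H_\chi)^G$ in a finite-index subgroup of a copy of $\Sp_4(\ZZ)$, so every factor contains such a copy and in particular has $\QQ$-rank $\ge2$. I would then invoke a generation theorem of Venkataramana type — finitely many higher-rank arithmetic subgroups in ``sufficiently connected'' relative position generate a subgroup of finite index — the required connectivity (that the $\psi(W)$ span $H_1(S;\QQ)$ and overlap pairwise in large subspaces) being furnished by the same geometric input as in part (i). This is exactly where rank $\ge2$, i.e. genus two rather than genus one for the trivially covered subsurface, is indispensable: a genus-one subsurface produces only $\SL_2$'s, to which no such generation statement applies.

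\textbf{The main obstacle.}
The heart of the matter, in both parts, is the geometric statement that the $\diff^+(S)^G$-translates of $W=H_1(\widetilde\Sigma;\QQ)$ span $H_1(S;\QQ)$ — equivalently, that the action of the liftable mapping class group on $H_1(S;\QQ)$ has no proper invariant subspace containing the homology of a trivially covered genus-$g$ handle. This is an irreducibility-type property of the higher Prym (twisted homology) representations of $\Mod(S_G^\circ)$, to be established either by appeal to known irreducibility results for such representations or by a Putman-style connectivity argument for a complex of trivially covered subsurfaces, using crucially that $S_G$ has genus at least $2$ so that $\widetilde\Sigma$ can be moved about freely. Everything else is bookkeeping with the symplectic form and standard facts about finite-index subgroups of $\SL_2(\ZZ)$ and of higher-rank arithmetic groups.
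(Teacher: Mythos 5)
There is a genuine gap, and you have in fact named it yourself in your final paragraph: the spanning statement (``the $\diff^+(S)^G$-translates of $W$ span $H_1(S;\QQ)$'') is asserted but never proved. In the paper this is Corollary~\ref{cor:hyperbolicgen}, and it is not a soft density observation; its proof runs through the arc-representability Lemma~\ref{lemma:Ghit} (every $G$-trivial relative homotopy class on the normalised cut surface $\hat S_G^\circ(\alpha)$ is represented by an embedded arc whose closure meets $\alpha$ once transversally) and Corollary~\ref{cor:rep} (every $b\in I^\perp$ with $a\cdot b=1$, $ga\cdot b=0$ for $g\neq 1$, can be corrected by an element of $Ra$ to become the second member of an $R$-hyperbolic pair realised by an embedded lift). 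That is the topological engine of the whole paper; replacing it by ``one concludes'' is precisely the missing idea.

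The arithmeticity step in part~(ii) has a second, independent problem. You propose to feed conjugates of a diagonal $\Sp_4(\ZZ)\cong\Gamma_\Sigma$ into ``a generation theorem of Venkataramana type,'' but the Raghunathan--Venkataramana theorem (Theorem~\ref{thm:generation} in the paper) does not take copies of $\Sp_4(\ZZ)$ as input: it takes lattices inside the unipotent radicals of two opposite $\QQ$-parabolic subgroups of a higher-rank almost-simple $\QQ$-group. A diagonally embedded $\Sp_4(\ZZ)$ is not an arithmetic subgroup of $\U(H_1(S)[\chi])$, nor are its unipotents the full unipotent radicals of any parabolic of that group, so the hypotheses are not met. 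The paper gets around this by producing a strictly larger liftable subgroup than your $\Gamma_\Sigma$: in addition to the $\SL_2(\ZZ)$ you see, it exhibits the transvections $T_a(R_{++})$ via the multi-Dehn twists $D^\beta$ of Lemma~\ref{lemma:multidehn} and the image of $G$ via push maps (Proposition~\ref{prop:imageinclusion}), assembling the group $\Gamma(a,b)\cong\Gamma(G)$ for each $R$-hyperbolic pair $(a,b)$. It is $\Gamma(G)$ -- not $\Sp_{2g}(\ZZ)$ -- that Proposition~\ref{prop:arithm1} proves to be arithmetic in the genus-one building block $\U_2(V_\chi^\dagger)$, and it is the opposite unipotent lattices $\Gamma_\pm(G)\subset\Gamma(G)$ that feed the inductive arithmeticity criterion Theorem~\ref{thm:transvectiongeneration}. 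Without these extra generators, nothing forces the group generated by your conjugated $\Sp_4(\ZZ)$'s to be Zariski-dense in $\U(H_1(S)[\chi])$, let alone of finite index. Finally, note that the paper actually works with a weaker hypothesis -- a closed $1$-submanifold $A\subset S_G^\circ$ with one (resp.\ two) components, trivial cover over $A$ and connected cover over the complement -- and the two components are used concretely in the genus-two case to produce $b_1,b_2\in\Bs$ with $\la b_1,b_2\ra=0$, the orthogonality needed to start the induction in Lemma~\ref{lemma:step1}; your genus-two-subsurface setup does not directly surface this orthogonal pair.
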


We will also obtain an arithmeticity property in the setting of (i). See the discussion  and the end of this introduction as well as Remark \ref{rem:exceptions}.

\begin{remark}\label{rem:}
Note that in either case the cover over the complement of such a subsurface of $S_G$ must be connected 
(for the subsurface has a connected boundary and $S$ is connected).  Since this complement has a boundary component over which the covering is trivial, we may contract that component (and each of the components lying over it)  to obtain a  $G$-covering  $S'\to S'_G$, where the genus of $S'_G$ is now 1 resp.\  2 less than that of $S_G$.  As this covering represents all the topological input, we may paraphrase our main theorem as  saying 
that the Putman-Wieland property resp.\ the arithmeticity property holds after a `stabilization' by taking a  connected sum of the base orbifold with a closed surface of  genus 1 resp.\ 2.
\end{remark}

We will prove this theorem under the 
apparently weaker assumption  that there exists  a closed  one-dimensional 
submanifold nonempty $A\subset S_G^\circ$, so a disjoint union of say $k\ge 1 $ embedded circles (with $k=1$ in case (i) and $k=2$ in case (ii))  such  that  $S\to S_G$ is trivial over $A$ and connected over 
$S_G\ssm A$.  This looks as if this is a more general result, because it is easy to find in the respective cases such an $A$ inside  the postulated subsurface with the property that its complement  is connected.  But we will see that this generalization is only  apparent.

There is also a useful geometric interpretation for this last formulation:  given such an $A$, then we can obtain the 
$G$-covering  $S'\to S'_G$ as above as follows: regard $S_G\ssm A$ as a punctured surface (so with two punctures for each component of $A$) and let   $S'_G\supset S_G\ssm A$  be the closed orbifold obtained by filling in these punctures as non-orbifold points. Our assumptions say that $S'_G$ is a closed connected surface  (the genus drop is the number of connected components of $A$) and that  the given $G$-covering  $S\to S_G$ arises from a $G$-covering $S'\to S'_G$ with  for each component of $A$ an 
identification of the fibers of this covering over the two associated points (as principal $G$-sets). If we give $S_G$ a complex structure and 
thus turn it into a smooth complex-projective curve with an orbifold structure, then an algebraic geometer might be tempted to regard  this orbifold curve as being in its moduli space near the Deligne-Mumford stratum where the orbifold acquires $k$ nodes, but for which the $G$-covering stays irreducible and does not ramify over the nodes. The covering  $S'\to S'_G$ then appears as the normalization of such a degeneration. No algebraic geometry is used in the proof, though, for the topological part of this paper uses methods that directly generalize those of \cite{L}.

Let us compare the above theorem with  the work of Gr\"unewald-Larsen-Lubotzky-Malestein \cite{GLLM},
whose main motivation was to construct, via the virtual isomorphism mentioned above, new arithmetic quotients of the mapping class 
group of $S_G$. They assume that $G$ acts freely so that $S_G=S^\circ_G$ and  impose another, more technical condition,  which in our set-up translates into that we are  in the context of (i)  and demand  that the covering $S'\to S'_G$ is of `handlebody type', in the sense that it extends to a handlebody that has $S'_G$ as boundary.
They prove  that  the image of $\diff^+(S)^G$ in each simple factor of  $\Sp(\Hl^1(S, \QQ))^G$ is arithmetic.  
Our  approach differs from theirs in several aspects, but mostly in  our direct  and relatively  simple way of constructing  $G$-equivariant mapping classes.
\\

When speaking  of arithmetic subgroups of  $\Sp(\Hl^1(S, \QQ))^G$, it is of course tacitly understood that  the latter can be regarded as the group of rational points of an algebraic group defined over $\QQ$. Let us make this explicit.

Denote by  $X(\QQ G)$  the set of irreducible characters of $\QQ G$ and  choose for every $\chi\in X(\QQ G)$ a representing irreducible (left) $\QQ G$-module
$V_\chi$. We also fix on every $V_\chi$ a $G$-invariant inner  product $s_\chi :V_\chi\times V_\chi\to \QQ$ (which can be obtained as the $G$-average of an arbitrary inner product). This exhibits $V_\chi$ as a self-dual $\QQ G$-module.
Then $\End_{\QQ G}(V_\chi)$ is a skew field which is of finite $\QQ$-dimension. We denote its opposite by $D_\chi$ (meaning that the underling $\QQ$-vector space is $\End_{\QQ G}(V_\chi)$, but that composition is taken in opposite order) so that $V_\chi$ is now a right $D_\chi$-module. Adopting as a  convention that $D_\chi$ used as superscript (resp.\ subscript) indicates that we  are dealing with right (resp.\ left) $D_\chi$-module endomorphisms, then the natural map 
\[
\textstyle \QQ G\cong \prod_{\chi\in X(\QQ G)} \End^{D_\chi}(V_\chi),
\]
is an  isomorphism of $\QQ$-algebras. This is  in fact the Wedderburn decomposition of $\QQ G$, as each factor is  a minimal $2$-sided ideal.

The group algebra $\QQ G$ comes with an anti-involution $r\mapsto r^\dagger$ which takes each basis 
element $e_g$ ($g\in G$) to the basis element  $e_{g^{-1}}$. This identifies $\QQ G$ with its opposite. 
Since $V_\chi$ is self-dual,  the involution leaves in the above decomposition each factor $ \End^{D_\chi}(V_\chi)$  
invariant and induces one in the skew-field
$D_\chi$:  the involution on $\End^{D_\chi}(V_\chi)$ is given by taking  the  $s_\chi$-adjoint:
$s_\chi (\sigma v, v')=s_\chi (v, \sigma^\dagger v')$. Since we have $D_\chi$ acting on $V_\chi$ on the right, 
this means that $s_\chi(v\lambda , v')=s_\chi(v, v'\lambda^\dagger)$.  (We note in passing that any other $G$-invariant inner  product 
$s'_\chi$ on $V_\chi$ is of the form $s_\chi(v\lambda , v')$ for some nonzero $\lambda$ with $\lambda^\dagger=\lambda$; the associated anti-involution of $D_\chi$ is then  a conjugate of $\dagger$.)
The center  of $D_\chi$, which we denote by $L_\chi$, is a number field and the fixed point set  of
$\dagger$ in $L_\chi$ is  a subfield  $K_\chi\subset L_\chi$ with $[L_\chi:K_\chi]\le 2$.

For a finitely generated  $\QQ G$-module $H$, we denote by  $H[\chi]$ the associated $\chi$-isogeny space $\hom_{\QQ G}(V_\chi, H)$. The right $D_\chi$-module structure on $V_\chi$ determines a  left $D_\chi$-module structure on $H[\chi]$ and the \emph{isotypical decomposition} of $H$ is the assertion that   the natural map
\[
\oplus_{\chi\in X(\QQ G)} V_\chi\otimes_{D_\chi} H[\chi]\to H, \quad v\otimes_{D_\chi} u\in V_\chi\otimes_{D_\chi} H[\chi]\mapsto u(v)
\]
is an  isomorphism of $\QQ G$-modules. So the \emph{$\chi$-isotypical subspace} of $H$, ie, the image $H_\chi$ of $V_\chi\otimes_{D_\chi} H[\chi]$ in $V$, has the structure of a $K_\chi$-vector space.

Assume now that $H$ comes equipped with a nondegenerate $G$-invariant symplectic  form $(a,b)\in H\times H\mapsto a\cdot b\in \QQ$. 
Then the isotypical  decomposition of $H$ is symplectic,  so that we 
also have a decomposition $\Sp(H)^G=\prod_{\chi\in X(\QQ G)} \Sp(H_\chi)^G$. Every factor $\Sp(H_\chi)^G$ can be understood with the help
of the skew-hermitian form
\[
(f, f')\in H[\chi]\times H[\chi]\mapsto \la f ,f' \ra_\chi\in D_\chi,
\]
which is characterized by the property that for all $v, v'\in V_\chi$,
\[
 f(v)\cdot f'(v')=s_\chi(v \la f, f'\ra_\chi,v')
\] 
 (\emph{skew-hermitian} means that
the form is $D_\chi$-linear in the first variable and $\la f', f\ra_\chi=-\la f,f'\ra_\chi^\dagger$). Indeed, for fixed 
$f$ and $f'$, the map $(v,v')\in V_\chi\times V\chi\mapsto f(v)\cdot f(v')\in \QQ$ is a  bilinear form on $V_\chi$. Since  $s_\chi: V_\chi\times V_\chi\to \QQ$ is nondegenerate,  there exists a unique  $\QQ$-linear endomorphism  $\sigma$ of $V_\chi$ such that
$f(v)\cdot f(v')=s_\chi(\sigma (v), v')$.  The $G$-invariance of both bilinear forms implies that 
$\sigma$ is $G$-equivariant, ie, is an element of  $\End_{\QQ G}(V_\chi)$. We prefer to regard it as an element of its opposite $D_\chi$, so that $f(v)\cdot f(v')=s_\chi(v\sigma, v')$. This $\sigma$ is evidently $\QQ$-linear in both $f$ and $f'$ and that is why we denote it  $\la f, f'\ra_\chi\in D_\chi$. It is then a little exercise to check that $\la \; , \; \ra_\chi$ is skew-hermitian.  This form is nondegenerate in an obvious sense. The group of automorphisms 
$H[\chi]$ that preserve this form is a generalized unitary group and therefore denoted $\U(H[\chi])$.

Any element of  $\Sp(H_\chi)^G$  acts via the isomorphism  $H_\chi\cong V_\chi\otimes_{D_\chi} H[\chi]$  as an element of the form $1_{V_\chi}\otimes u$ with $u\in\U(H[\chi])$ and this identifies $\Sp(H_\chi)^G$  with  $\U(H[\chi])$. The group $\U(H[\chi])$ is the group of $K_\chi$-points of a reductive algebraic  group defined over $K_\chi$, whereas $\Sp(H_\chi)^G$ is the group  of $\QQ$-points of an algebraic  group defined over $\QQ$. Indeed,   the latter is obtained from  the former by the restriction of  scalars $K_\chi|\QQ$.  

Theorem \ref{thm:main} then amounts to the assertion  that the image of $\diff^+(S)^G$ in the product of unitary groups 
$\prod_{\chi\in X(\QQ G)} \U(\Hl_1(S; \QQ)[\chi])$ is  arithmetic. We use this decomposition to prove the theorem, since  we first prove arithmeticity for a single factor. This  leads to a somewhat stronger result, for we show  that in setting of (i)  of our main theorem (so when $S\to S_G$ is trivial  over a genus 1 subsurface) the image of $\diff^+(S)^G$ in $\U(\Hl_1(S; \QQ)[\chi])$ is almost always arithmetic (see Remark \ref{rem:exceptions}). \\

Here is a brief description of the structure of the paper.  Of the four sections, only the last one is topological, but in order to put the  constructions given there to work, we need a considerable amount of algebra and that explains the nature of the preceding  sections. 

Section \ref{sect:special unitary} collects  useful (and essentially known)  algebraic proprieties of constructs that we encounter in the symplectic representation  theory of a finite group over $\QQ$.
So there is little or no claim  of originality here, although it was (for us) a bit of an effort  to extract this material  from the literature.
In Section \ref{sect:hyperbolic} we introduce and study what we  might regard as the basic symplectic module 
associated to an irreducible  $\QQ G$-module, where $G$ is a finite group. The main result is Proposition \ref{prop:arithm1}  which states an arithmeticity property and also lists the (few) cases for which this arithmetic group has real rank $\le 1$. This prepares us for stating and proving the arithmeticity criterion Theorem \ref{thm:transvectiongeneration} in Section \ref{sect:ac}, which  furnishes  the main algebraic input for Section  \ref{sect:liftable}. As mentioned,  this last section is essentially topological: we there construct sufficiently many $G$-equivariant mapping classes to ensure that we can apply the said theorem to obtain our main Theorem \ref{thm:main}.
\\

At various stages of this work I  benefited from correspondence with  colleagues on this material.  
These include many enlightening  exchanges with Marco Boggi,  who in 2019  provided me with a number of helpful comments on earlier drafts of the present paper. Tyakal Venkataramana explained to me in 2018 some of  the implications  of \cite{venky} and \cite{raghunathan}.  Justin Malestein helped me to  better understand  parts of \cite{GLLM}. I thank all of them. I am also very grateful to a number of referees for their sometimes extensive comments on an earlier version. The paper greatly improved as a result.

\section{Brief review of special unitary groups}\label{sect:special unitary}
\subsection*{The Albert classification}
In this subsection $D$  is a skew field of  finite dimension over $\QQ$ endowed with an anti-involution 
$\dagger$.  We assume that the involution  $\dagger$ is positive in the sense that 
$\lambda \in D\mapsto \tr_{D/\QQ}(\lambda \lambda^\dagger)$ is a positive definite form. 
We remark that this is so for the cases that matter 
here, for the given anti-involution on $\QQ G$ is evidently positive: for $r\in \QQ G$, the $\QQ$-trace of 
$r r^\dagger$ is $|G|$ times the coefficient of $e_1$ in  $r r^\dagger$ and hence is positive definite. 
The same is then true for its Wedderburn  factors  $\End^{D_\chi}(V_\chi)$  and their associated skew fields $D_\chi$.

We denote the center of $D$ by $L$  (so that is a number field) and by $D_+$ resp.\ $K$ the $\dagger$-invariant part of $D$ resp.\ $L$. 
Albert's  classification of such pairs $(D, \dagger)$ (see for example \cite{mumford:av}, Ch.~IV, Thm.~2) then tells us that $K$ is totally real,  so that $\RR\otimes_\QQ D=\prod_\sigma \RR\otimes_\sigma D$, where $\sigma$ runs over the distinct field embeddings $\sigma :K\hookrightarrow \RR$, and that there are essentially four cases:
\begin{enumerate}
\item[(I)] $D=L=K$ so that $\RR\otimes_\sigma D=\RR$ for each $\sigma$,
\item[(II)]  $L=K$ and for each $\sigma$ there exists an  isomorphism 
$\RR\otimes_\sigma D\cong \End_\RR(\RR^2)$ which sends $\dagger$ to taking the transpose (so $[D:L]=4$),
\item[(III)] $L=K$ and for each $\sigma$ there exists an  isomorphism 
$\RR\otimes_\sigma D\cong \KK$, where $\KK$ denotes the Hamilton quaternions,  which sends $\dagger$ to quaternion conjugation (so $[D:L]=4$), 
\item[(IV)]  $L$ is a purely imaginary extension of $K$ (in other words, $L$ is a CM field) and for each $\sigma$ there exists an  isomorphism 
$\RR\otimes_\sigma D\cong \End_\CC(\CC^d)$,   which takes $\RR\otimes_\sigma L$ to $\CC$ (so $[D:L]=d^2$) and sends $\dagger$ to taking the conjugate transpose.
\end{enumerate}

Let $M$ be a left $D$-module of finite rank. 
We write $M^\dagger$ for $M$ endowed with the structure of a right $D$-module via the rule $a\lambda :=\lambda^\dagger a$ ($a\in M$, $\lambda\in D$). So if $M'$ is another   left $D$-module, then $M^\dagger \otimes_{D} M'$ is defined. It is a $K$-vector space with the property that
$a\otimes_D \lambda a'=(\lambda^\dagger a)\otimes_D a'$ for all $\lambda\in D$, $a\in M$ and $a'\in M'$. In particular, we have in $M^\dagger \otimes_{D} M$ a $K$-linear involution defined by 
$(a\otimes_D b)'=b\otimes_D a$. We denote its fixed point set  $u(M)\subset M^\dagger \otimes_{D} M$. As a  $\QQ$-subspace of $M^\dagger \otimes_D M$ it is spanned by the symmetric tensors $a\otimes_D a$.

\subsection*{Isotropic transvections and Eichler transformations} Suppose given a  skew-hermitian form $(a,b )\in M\times M\mapsto \la a,b\ra\in  D$ on $M$.  
We denote its radical by $M_o$ so that the form  descends to a nondegenerate one on 
$\overline M:=M/M_o$. We define the associated  unitary  group $\U(M)$  as the group  of $D$-linear automorphisms of $M$ that preserve the form and act as the identity on $M_o$. It is the group of $K$-points of an algebraic group defined over $K$. If the form is nondegenerate ($M_o=0$), then  $\U(M)$  is what is called in \cite{HO}  (\S 5.2B) a \emph{classical unitary group}.

If $c\in M$ is \emph{isotropic} (meaning that $\la c,c\ra =0$), then we have the associated \emph{isotropic transvection}  
$T_c=T(c\otimes_D c)\in \U (M)$ defined by $x\in M\mapsto  x+ \la x,c\ra c$ (so $c\otimes_D c$ is here understood as an element of $M^\dagger\otimes_D M$).  It `generates' an abelian unipotent subgroup of $\U(M)$ defined by 
\[
\lambda\in D_+\mapsto T(c\otimes_D \lambda c)\in \U(M)\; , \;T(c\otimes_D \lambda c)(x)=x+\la x, \lambda c\ra c.
\] 
Isotropic transvections  are particular cases of  Eichler transformations. These are defined as follows. Let  $c\in M$ be isotropic,  $a\in M$  perpendicular to $c$ and $\lambda\in D$ such that $\lambda-\lambda^\dagger=\la a,a\ra$ (equivalently,  $\lambda-\frac{1}{2}\la a,a\ra  \in D_+$). Then the associated \emph{Eichler transformation} is
\[
E(c,a,\lambda): x\in M\mapsto x+\la x,a\ra c+\la x,c\ra a+ \la x,c\ra \lambda c\in M. 
\]
It is a $D$-linear transformation which preserves the form. When $\lambda=\frac{1}{2}\la a,a\ra $, we shall write $E(c,a)$ instead. Since $T(c\otimes_D c)=E(\frac{1}{2}c,c)$, isotropic transvections are Eichler transformations as asserted.

One checks  that each  Eichler transformation lies in $\U(M)$ as defined above. In fact, 
$t\in K\mapsto  E(tc,a,\lambda)=E(c,ta,t^2\lambda)$ is a closed one-parameter subgroup of $\U(M)$ whose infinitesimal generator is represented by $a\otimes_Dc +  c\otimes_Da\in u(M)$ (or rather by  its image in $u(M)/u(M_o)$, for if both $a$ and $c$ lie in $M_o$, then we get the identity).  By a general property of algebraic groups (\cite{Spr}, Cor.\ 2.2.7) such subgroups then generate a closed algebraic $K$-subgroup of  $\U(M)$. Following \cite{HO} we denote that group by $\EU(M)$.

We note the commutator identity
\begin{equation}\label{eqn:comm}
[E(c,a_1,\lambda_1), E(c,a_2,\lambda_2)]=T(c\otimes \lambda c), \text{  with  } \lambda=\la a_1, a_2\ra+\la a_1, a_2\ra^\dagger.
\end{equation}
It follows that if we fix $c$, but let $a$ and $\lambda$ vary (subject to the conditions above, so with $a\in c^\perp$), then the $E(c, a, \lambda)$ generate a unipotent group that appears as an extension
of the vector group $c^\perp/(M_o+Dc)$ by the abelian subgroup of $\U(M)$ defined by the $T(c\otimes_D \lambda c)$.  

The group $\EU(M)$ is already generated by the isotropic transvections: When $\dagger$ is nontrivial this follows from (6.3.1) of  \cite{HO}. The remaining case is  the one we labelled (I): this is when $D=L=K$ and $\U(M)$ is a symplectic group over $K$, but then there is no issue for then every  $a\in M$ is isotropic, and then $E(c,a)=T_{a+c}T_a^{-1}T_c^{-1}$. 

\subsection*{Unipotent radical and Levi quotient} 
If the form is nondegenerate (ie, $M_o=\{0\}$), then  $\EU(M)$ is a  $K$-form of a classical semisimple algebraic group and hence has finite center. To be precise, it is a group of symplectic type in the cases (I) and (II), of orthogonal type in case (III) and of special linear type in case  (IV).

If $M_o$ is possibly nonzero, then per convention the elements of $\U(M)$ act trivially on $M_o$.
The natural map $\U(M)\to \U(\overline M)$ is  evidently onto. Its kernel consists of the transformations that act trivially on both $M_o$ and $M/M_o$ and is therefore the  unipotent radical $\ru(\U(M))$ of $\U(M)$ (recall that $\U(\overline M)$ is reductive): we have an exact sequence
\[
1\to \ru(\U(M))\to \U(M)\to \U(\overline M)\to 1
\]
The  elements of $\ru(\U(M))$ are the Eichler transformations $E(c,a)$ with $c\in M_o$ and $a\in M$ arbitrary. In this case $E(c,a)$ only depends on the image of $c\otimes_\ZZ a$ in $M_o^\dagger\otimes_D \overline M)$, so that the resulting map
\[
M_0^\dagger\otimes_D \overline M\to \ru (\U(M))
\]
is  an isomorphism. So $\ru(\U(M))$ is a vector group over $K$ (ie, a $K$-vector space regarded as the group of $K$-points of a $K$-algebraic group of additive type). Since $\EU(\overline M)$ is a normal semisimple subgroup of $\U(M)$, it has the same unipotent radical: $\ru (\EU(M))=\ru (\U(M))$.

\subsection*{The relation between $\EU (M)$ and $\U(M)$} 
Since in what follows the notion of real rank of an algebraic group shows up,  let us begin with reviewing this concept briefly.  

Let $\Gs$ be  a reductive  algebraic group. Suppose first that $\Gs$ is defined over $\RR$. Then  the real rank $\rank_\RR(\Gs)$ of $\Gs$ is by definition 
the dimension of a Cartan subgroup of $\Gs$ defined over $\RR$. For example, if $\Gs$ is the 
orthogonal group of a nondegenerate quadratic form over $\RR$, then its real rank is the 
Witt index of this form: the dimension of a maximal isotropic subspace defined over $\RR$. 
If  $\Gs$ is defined over a number field $k$, then
we restrict scalars \`a la Weil so that $\Res_{k|\QQ}\Gs$ is a group defined over $\QQ$. We then regard $\Res_{k|\QQ}\Gs$ (by base change) as a group over $\RR$ and define the real rank of $\Gs$  to be the real rank of the latter. Concretely, if  $\sigma_1, \dots, \sigma_r$ are the real embeddings of $k$ in $\RR$ and 
 $\tau_{1}, \overline\tau_{1}, \dots, \tau_s, \overline\tau_s$ are the remaining distinct 
 (complex) embeddings (they come in complex conjugate pairs), then the definition comes down to 
 \[
\textstyle  \rank_\RR(\Gs)=\sum_{i=1}^r \rank_\RR(\Gs_{\sigma_i}) +\sum_{i=1}^{s} \rank_\CC(\Gs_{\tau_i}) 
\]
(this is also the sum over all the  archimedean valuations of $k$, taking as  general term the real rank of the corresponding completion of $\Gs(k)$). The Dirichlet unit theorem  often gives lower bounds for the rank.
For example,  if the skew field $D$  is  as in the Albert classification,  
the group of units $D^\times $  is a reductive group defined over $K$ and its group of real points and   its real rank are then as follows: putting $e:=[K:\QQ]$, then 
\begin{enumerate}
\item[(I)] $\Res_{K|\QQ}D^\times(\RR)$ is open in $(\RR^\times)^e$;  the  real rank of $D^\times$ is $e$,
\item[(II)]  $\Res_{K|\QQ}D^\times(\RR)$ is open in $\GL_2(\RR)^e$; the  real rank of $D^\times$ is $2e$,
\item[(III)] $\Res_{K|\QQ}D^\times(\RR)$ is open in $(\KK^\times)^e$; the  real rank of $D^\times$ is $e$,
\item[(IV)] $\Res_{K|\QQ}D^\times(\RR)$ is open in $\GL_d(\CC)^e$; the  real rank of $D^\times$  is 
$de$.
\end{enumerate}
So  $D^\times $ has real rank $\ge 2$, unless  $D$ equals $\QQ$ (I) or is 
a definite quaternion algebra with center $\QQ$ (III) or is an imaginary quadratic extension of $\QQ$.
\\

It is clear that $\EU(M)$ is a normal subgroup of $\U(M)$. We already noticed that it is closed in $\U(M)$ and hence the quotient  $\U(M)/\EU (M)$ is also an algebraic group. Note however that if $\overline M$ has no nonzero isotropic vectors, then $\EU(\overline M)$ is trivial.  We mention for future reference  a consequence of a theorem of 
G.E.~Wall \cite{wall}:

\begin{lemma}\label{lemma:norm}
Suppose that  $M$ is nondegenerate and contains a nonzero isotropic vector. Then  $\U(M)/\EU (M)$ is anisotropic (all its real forms are compact).  So $\EU(M)$ and $\U (M)$ have the  same real rank and any arithmetic subgroup of $\U(M)$ will have finite image in $\U(M)/\EU (M)$.
\end{lemma}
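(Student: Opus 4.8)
The plan is to reduce the statement to Wall's computation of the unitary group modulo its "Eichler" (elementary) subgroup, which identifies $\U(M)/\EU(M)$ with a classical spinor-norm-type abelian group. First I would pass to $\overline M = M/M_o$: since every element of $\U(M)$ acts trivially on $M_o$, and by the preceding subsection $\ru(\U(M)) = \ru(\EU(M))$, the quotient $\U(M)/\EU(M)$ is canonically isomorphic to $\U(\overline M)/\EU(\overline M)$. Thus we may assume the skew-hermitian form on $M$ is nondegenerate and contains a nonzero isotropic vector. Now invoke Wall's theorem \cite{wall}: for a nondegenerate classical unitary group of isotropic rank $\ge 1$, the quotient $\U(M)/\EU(M)$ is computed as a quotient of a norm-one group (or discriminant/spinor-norm group) attached to $(D,\dagger)$ — concretely, in the symplectic case (I) it is trivial, and in the remaining Albert cases it is a subquotient of $K^\times$ or $L^\times$ cut out by a norm condition, i.e.\ the group of $K$-points of a torus of the form $\Ker(\Nm_{L/K})$ or $K^\times/\Nm(\cdots)$ or similar.

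The key point is then that this torus is \emph{anisotropic over $K$}. Here I would use the Albert classification as recalled in the excerpt: $K$ is totally real, and $L$ is either $K$ (cases I, II, III) or a CM field (case IV). In case IV the relevant torus is $\Ker(\Nm_{L/K})$, which is anisotropic precisely because $L/K$ is a CM (hence totally imaginary) extension — at every real place $\sigma$ of $K$ the local form of this torus is $\U(1)(\RR)$, which is compact. In cases II and III the relevant invariant is a spinor-norm / discriminant group which, because $\dagger$ is a \emph{positive} involution (as noted in the excerpt, this holds for all the $D_\chi$ arising from $\QQ G$) and $K$ is totally real with the real completions as listed in (I)–(IV), again becomes a compact group at each real place. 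Since an algebraic $K$-torus all of whose real forms are compact is anisotropic, $\U(M)/\EU(M)$ is anisotropic. Equivalently, it has $K$-rank $0$, so $\EU(M)$ and $\U(M)$ have the same $K$-rank, hence the same real rank after restriction of scalars.

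For the final clause, let $\Gamma \subset \U(M)$ be an arithmetic subgroup and consider its image $\bar\Gamma$ in $Q := \U(M)/\EU(M)$. Because $\EU(M)$ is a closed normal $K$-subgroup, $\bar\Gamma$ is an arithmetic subgroup of the anisotropic torus $Q$; but an arithmetic subgroup of an anisotropic $K$-torus is finite (its real points form a compact group, and a discrete subgroup of a compact group is finite). Hence $\bar\Gamma$ is finite, as claimed.

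The main obstacle I anticipate is not conceptual but bookkeeping: extracting from Wall's paper the precise identification of $\U(M)/\EU(M)$ in each of the four Albert cases and checking case by case (using positivity of $\dagger$ and total reality of $K$) that the resulting abelian algebraic group is anisotropic. The symplectic case (I) is immediate since $\EU(M) = \Sp(M)$ already (the excerpt notes $\EU(M)$ is generated by transvections and for a symplectic space these generate the whole group), so the real work is in cases II, III, IV, where one must match the relevant norm group with the local compactness at each archimedean place. Once that dictionary is in place, anisotropy and the finiteness of arithmetic subgroups of anisotropic tori finish the argument.
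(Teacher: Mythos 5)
Your proposal follows essentially the same route as the paper: reduce to the nondegenerate case via $\ru(\U(M)) = \ru(\EU(M))$, invoke Wall's theorem to identify the quotient $\U(M)/\EU(M)$, and then check anisotropy in each Albert case using positivity of $\dagger$ and total reality of $K$. The paper states Wall's identification slightly more precisely (as a quotient of $D^\times$ by a normal subgroup containing $D^\times\cap D_+$, rather than directly as a norm-kernel torus, which it needn't be when $\dim_D M = 2$), but the content and the case analysis are the same.
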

\begin{proof}
Thm.\ 1 of  \cite{wall} identifies $\U(M)/\EU (M)$ as a quotient of $D^\times$ by a normal subgroup which contains $D^\times\cap D_+$.  It is easy to check that in all four cases (I)-(IV) in the Albert classification such a quotient must be anisotropic. 
\end{proof}

Wall's result is more specific and tells us that $\U(M)/\EU (M)$ is often an anisotropic torus. But this need not be so when $\dim_D M=2$.  

\begin{lemma}\label{lemma:simple}
If $M$ is nondegenerate isotropic, then the  $K$-algebraic group $\EU(M)$ is almost simple 
(by which we mean that $\EU(M)$ is perfect and every proper normal subgroup is contained in its center) unless $D=K$ 
and  $M\cong K^4$ is endowed with a nondegenerate symmetric form which admits an 
isotropic plane defined over $K$. 
\end{lemma}
\begin{proof}
This follows  from \cite{HO}, Thm.\ 6.3.16 combined with  Thm.\  6.3.15. 
\end{proof}

The excepted case is genuine,  for in that case $M\cong M_1\otimes_K M_2$ as modules endowed with  $K$-forms, where  $M_i$ is a 2-dimensional $K$-vector space endowed with a nondegenerate symplectic form. The resulting map  $\SL(M_1)\times\SL(M_2)\to\GL(M)$ has image $\EU(M)\cong \Orth(M)$ and its  kernel has $(-1,-1)$ as its unique nonidentity element.

\begin{remark}\label{rem:}
The  \emph{reduced norm} is the homomorphism $N: \U(M)\to L^\times$ characterized by the following property: if $T\in \U(M)$, then for some (or equivalently, every) real embedding $\sigma :K\hookrightarrow \RR$, the $D$-linear  $T$ induces a linear transformation of the $\RR\otimes_\sigma L$-vector space $\RR\otimes_K M$
whose determinant is $1\otimes_\sigma N(T)$.
The kernel of $N$, usually denoted $\SU(M)$,  contains $\EU(M)$ and is often equal to it. But in our context this group does not show up in a natural manner.
\end{remark}

\section{The hyperbolic module attached to a finite group}\label{sect:hyperbolic}

\subsection*{A hermitian extension} Our discussion of symplectic $\QQ G$-modules also applies to orthogonal $\QQ G$-modules.
One such module is $\QQ G$ itself (regarded as a left module):  it comes indeed with $G$-invariant
pairing $\QQ G\times \QQ G\to \QQ$, the \emph{trace form}, which assigns to the pair $(r_1,r_2)$ the trace of $r_1r_2^\dagger$ considered as an endomorphism of $\QQ G$ as a $\QQ$-vector space (this is simply $|G|$ times the coefficient of $e_1$). This pairing is symmetric and nondegenerate.

The Wedderburn decomposition  $\QQ G\cong\prod_\chi\End^{D_\chi}(V_\chi)$ is also the isotypical decomposition, for the $K_\chi$-linear map
\[
V_\chi\otimes_{D_\chi} \hom^{D_\chi}(V_\chi, D_\chi)\to \End^{D_\chi}(V_\chi), \quad 
v_1\otimes_{D_\chi} f : v\in V_\chi\mapsto v_1f(v)
\]
which assigns to $v_1\otimes_{D_\chi} f$ the endomorphism $v\in V_\chi\mapsto v_1f(v)$ is well-defined and is an isomorphism of left  $\QQ G$-modules (and also as right $\QQ G$-modules). This also shows that 
$\QQ G[\chi]\cong  \hom^{D_\chi}(V_\chi, D_\chi)$ as a right $D_\chi$-module. 

We claim that $\hom^{D_\chi}(V_\chi, D_\chi)\cong V_\chi^\dagger$ as left $D_\chi$-modules.  This is based on a hermitian extension of $s_\chi$ to $V_\chi^\dagger$: if we follow the same recipe as in the introduction for a symplectic representation, then we find that there is  a  hermitian form $h_\chi:V_\chi^\dagger\times V_\chi^\dagger\to D_\chi$  characterized by the property that for all $v, v'\in V_\chi$ and $f, f'\in V_\chi^\dagger$,
\[
s_\chi(v, f')s_\chi(v', f)=s_\chi (vh_\chi (f, f'), v').
\]
This formula implies  that $h_\chi$ is $G$-invariant (we let $G$ act on the right of $V_\chi^\dagger$).
By taking $v=f=f'$, we also see that $h_\chi (f, f)=s_\chi (f,f)$, so that   $h_\chi$ is a hermitian extension of $s_\chi$. For every $v\in V_\chi^\dagger$, the expression $h_\chi(v,\; -)$ yields an element of $\hom^{D_\chi}(V_\chi, D_\chi)$ and defines the stated isomorphism.

\subsection*{Isotropic transvections}
Here and in the rest of this paper, we write  $R$ for the integral group ring  $\ZZ G$.
Let $M$ be a finitely generated (left) $R$-module, free over $\ZZ$ and let $(a,b)\in M\times M\mapsto a\cdot b\in \ZZ$ be a nondegenerate (but not necessarily unimodular) $G$-equivariant symplectic form. We extend this in the standard manner to a form 
\[
\textstyle (a,b)\in M\times M\mapsto \la a, b\ra :=\sum_{g\in G} (g^{-1}a\cdot b) e_g=\sum_{g\in G} (a\cdot gb) e_g\in R.
\]
This form is skew-hermitian: it is $R$-linear in the first variable and $\la a, b\ra=-\la b,a\ra^\dagger$. A $\ZZ$-linear automorphism of $M$ is $G$-equivariant and preserves the symplectic form if and only if it is an $R$-module automorphism which preserves this skew-hermitian form. We denote the group of such automorphisms by $\U(M)$. 

Let $R_+$ stand for the fixed point set of  $\dagger$ in $R$; this is an additive subgroup of $R$.  If  $a\in M$ is \emph{$R$-isotropic}  in the sense that $\la a, a\ra=0$,  then for every $r\in R_+$  the isotropic transvection
\begin{equation}\label{eqn:Tformula}
T_a(r): x\in M\mapsto x +\la x,a\ra ra\in M
\end{equation}
lies in $U(M)$ and $r\in R_+\mapsto T_a(r)$ is a homomorphism  from (the additively written) $R_+$ to (the multiplicatively written) $U(M)$. Since $T_a(r)$ only depends on $a\otimes ra\in M^\dagger\otimes_R M$, we also denote this transformation by  $T(a\otimes ra)$.

\subsection*{The basic hyperbolic module}
Let $A$ be a (not necessarily commutative) unital ring with unit endowed with an anti-involution $\dagger$.
The \emph{basic hyperbolic $A$-module}  $\Hcal^2(A)$ is the free left $A$-module of rank $2$  (whose  generators we  denote $\eb$ and $\fb$) endowed with the skew-hermitian form defined by $\la \eb, \eb\ra=\la \fb, \fb\ra=0$ and $\la \eb, \fb\ra=1$. It can be regarded as the $A$-form of the standard symplectic module $\ZZ^2$. In vector notation:
\begin{equation}\label{eqn:form}
\Big\la\binom{a'}{a''} , \binom{b'}{b''}\Big\ra =a'b''{}^\dagger-a''b'{}^\dagger.
\end{equation}
It is unimodular in the sense that $a\in \Hcal^2(A)\mapsto \la -,a\ra\in \hom_A(\Hcal^2(A), A)$ is an antilinear isomorphism. 
We will write  $\U_2(A)$ resp.\  $\EU_2(A)$ for $\U(\Hcal^2(A))$ resp.\ $\EU(\Hcal^2(A))$.
The latter contains $\SL_2(\ZZ)$ in an obvious manner. Let $A_+\subset  A$ be the set of $\dagger$-invariant elements. One verifies that  $T_\eb: x\in \Hcal^2(A)\mapsto  x+ \la x,\eb\ra \eb$ and the similarly defined   $T_\fb$ have the matrix form
\[  
T_\eb(a)=(\begin{smallmatrix}
1 & -\rho_a\\
0 &  1\\
\end{smallmatrix}) 
\text{  resp.\  }
T_\fb(a)=(\begin{smallmatrix}
1 & 1\\
\rho_a &  1\\
\end{smallmatrix}), 
\]
where $\rho_a$ stands for right multiplication with $a$.

\subsection*{The group $\G(G)$}
We take $A=R(=\ZZ G)$. The elements of the form $r+r^\dagger$ with $r\in R$, make up a  subgroup $R_{++}$ of $R_+$ such that $R_+/R_{++}$ is a finite dimensional  $\FF_2$-vector space.   Let  $\G_+(G)$ resp.\  $\G_-(G)$ denote  the subgroup of  $\EU_2(R)$ generated by $T_\eb(R_{++})$ resp.\ $T_\fb(R_{++})$ and let  $\Gamma_o(G)\subset \EU_2(R)$ stand for the subgroup 
 generated by $\G_+(G)$ and $\SL_2(\ZZ)$.  Since  $\G_-(G)$ is a $\SL_2(\ZZ)$-conjugate of $\G_+(G)$, the group
$\Gamma_o(G)$ contains $T_\fb(R_{++})$.
The right (inverse) action of $G$ on $\Hcal^2(R)$ defines an embedding of $G$ in $\U_2(R)$. One checks that
\[
\rho_gT_\eb(r)\rho_{g^\dagger}= T_\eb (gr g^{-1})
\]
and so $G$ normalizes $\Gamma_o(G)$. We put $\G(G):=\Gamma_o(G).G$.

\subsection*{Arithmetic nature of $\Gamma(G)$}
The notion of a basic hyperbolic module generalizes in a straightforward manner to $\Hcal^2(V_\chi^\dagger)$, the skew-hermitian form being given by
\begin{equation}\label{eqn:form2}
\Big\la\binom{v'}{v''} , \binom{w'}{w''}\Big\ra =h_\chi(v',w'')-h_\chi(v'',w').
\end{equation}
So we have defined $\U_2(V_\chi^\dagger)$; it is the group of $K_\chi$-points of a reductive  algebraic group defined over $K_\chi$. If we write an element of $\U_2(V_\chi^\dagger)$ in block form
$(\begin{smallmatrix}
A & B\\
C&  D\\
\end{smallmatrix})$, with $A,B,C,D$ in $\End_{D_\chi}(V_\chi^\dagger)$, then the subgroup defined by $C=0$ is a  parabolic subgroup.
Its unipotent radical is given by requiring that in addition $A$ and $D$ are the identity. The corresponding subgroup is  then the vector  group  $(\begin{smallmatrix}
1 & B\\
0&  1\\
\end{smallmatrix})$ for which $B$ is hermitian relative to $h_\chi$. In other words, $h_\chi$ identifies $B$ with an element of
$u(V_\chi^\dagger)$ (that is, a symmetric element of $V_\chi\otimes_{D_\chi}V_\chi^\dagger$) and hence defines an isotropic transvection.  In particular, this is also the unipotent radical of the corresponding subgroup of $\EU_2(V_\chi^\dagger)$.  An opposite parabolic subgroup is defined by $B=0$ and has a similar description of its unipotent radical. Let us denote these unipotent radicals $\cR_+(\U_2(V_\chi^\dagger))$ resp.\  $\cR_-(\U_2(V_\chi^\dagger))$.

We run into this  when we consider the isotypical decomposition of $\Hcal^2 (\QQ G)$.
The isogeny space $\Hcal^2(\QQ G)[\chi]=\hom_{\QQ G} (V_\chi, \Hcal^2(\QQ G))$ is then a left $D_\chi$-module that is naturally identified with $\Hcal^2(V_\chi^\dagger)$. This gives rise to a decomposition
\[
\U_2(\QQ G)=\prod_{\chi\in X(\QQ G)} \U_2(V_\chi^\dagger).
\]
The image of   $\G_\pm(G)$ in $\U_2(V_\chi^\dagger)$ clearly lands in $\cR_\pm(\U_2(V_\chi^\dagger))$.
Since $\End_{D_\chi}(V_\chi^\dagger)=\End^{D_\chi}(V_\chi)$ is a Wedderburn factor of $\QQ G$, it follows
that the image of $R$ in $\End_{D_\chi}(V_\chi^\dagger)$ is an order (a lattice that is also a unital algebra).
This is compatible with the anti-involutions and hence the image of $R_{++}$ in $\End_{D_\chi}(V_\chi^\dagger)$ is a lattice in the subspace of  hermitian matrices. So the image of $\G_\pm(G)$ in
$\U_2(V_\chi^\dagger)$ is a lattice in $\cR_\pm(\U_2(V_\chi^\dagger))$.

It is clear  that $\Gamma_o(G)$  maps to $\EU_2(V_\chi^\dagger)$. 
\\

\begin{proposition}\label{prop:arithm1}
The image of  $\Gamma(G)$ in $\U_2(V_\chi^\dagger)$  is an arithmetic subgroup. The real rank of $\U_2(V_\chi^\dagger)$ is $\ge 2$ unless $\dim_{D_\chi}V_\chi=1$. In that last case,  where we can  assume that $V^\dagger_\chi=D_\chi$ with $G$ acting on the right and mapping to its group of units, one of the following  holds:
\begin{enumerate}
\item[(i)] $D_\chi=\QQ$ and  $G$  maps to $\mu_2$, 
\item[(iia)]  $D_\chi$ is the Gaussian field $\QQ(\sqrt{-1})$  and $G$ maps onto $\mu_4$, or
\item[(iib)] $D_\chi$ is the Eisenstein field $\QQ(\sqrt{-3})$  and  $G$ maps onto  $\mu_3$ or $\mu_6$, or
\item[(iiia)] $D_\chi\cong \QQ +\QQ\mathbf{i}+\QQ\mathbf{j}+\QQ\mathbf{k}$ and  $G$ maps onto
its group of units (a binary tetrahedral group of order $24$) or onto the quaternion  group of order $8$, or  
\item[(iiib)] $D_\chi\cong \QQ +\QQ\sqrt{3}\mathbf{i}+\QQ\mathbf{j}+\QQ\sqrt{3}\mathbf{k}$ and $G$ maps onto the binary dihedral group of order $12$.
\end{enumerate}
In all  cases, $\Gamma(G)$ acts  $\QQ$-irreducibly in $\Hcal^2(V_\chi^\dagger)$.
\end{proposition}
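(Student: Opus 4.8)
The plan is to establish the three assertions of the proposition in the order: (a) arithmeticity of the image of $\Gamma(G)$ in $\U_2(V_\chi^\dagger)$; (b) the rank computation together with the classification of the exceptional low-rank cases; (c) $\QQ$-irreducibility. For (a), the key point is that we have two opposite unipotent radicals in play. We have seen that the image of $\G_+(G)$ in $\U_2(V_\chi^\dagger)$ is a lattice in $\cR_+(\U_2(V_\chi^\dagger))$ and that the image of $\G_-(G)$ is a lattice in $\cR_-(\U_2(V_\chi^\dagger))$, and that $\SL_2(\ZZ)\subset\EU_2(\ZZ)$ maps in. The subgroup of $\EU_2(V_\chi^\dagger)$ generated by these two opposite unipotent lattices is, by the general arithmeticity machinery for groups generated by opposite unipotent subgroups over number fields (this is exactly the kind of input available from \cite{venky}, \cite{raghunathan}; one applies it after the restriction of scalars $K_\chi|\QQ$), an arithmetic subgroup of $\EU_2(V_\chi^\dagger)$ — provided $\EU_2(V_\chi^\dagger)$ has $K_\chi$-rank $\ge 2$, which holds precisely because $\Hcal^2$ contains the hyperbolic plane spanned by $\eb,\fb$, so the $K_\chi$-rank is always $\ge 1$ and is $\ge 2$ as soon as $\dim_{D_\chi}V_\chi\ge 2$ (one then has an isotropic line inside $V_\chi^\dagger\oplus V_\chi^\dagger$ transverse to the obvious one). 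Since $\Gamma_o(G)\supset\G_+(G)$ and $\SL_2(\ZZ)$, and $\Gamma(G)=\Gamma_o(G).G$ with $G$ mapping into $\U_2(V_\chi^\dagger)$ with Zariski-dense-enough image, and since $\U_2/\EU_2$ is anisotropic by Lemma \ref{lemma:norm} (applied to $\Hcal^2(V_\chi^\dagger)$, which certainly contains nonzero isotropic vectors), commensurability with an arithmetic subgroup of $\EU_2$ upgrades to arithmeticity in $\U_2$.

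For (b), the real rank of $\U_2(V_\chi^\dagger)$ after restriction of scalars is the sum over real places $\sigma$ of $K_\chi$ of the real rank of $\RR\otimes_\sigma\U_2(V_\chi^\dagger)$, and each such summand is $\ge 2$ once $\dim_{D_\chi}V_\chi\ge 2$ by the isotropy argument above. So the only way to fail rank $\ge 2$ is $\dim_{D_\chi}V_\chi=1$, i.e. $V_\chi^\dagger\cong D_\chi$ as a right $D_\chi$-module with $G$ acting on the right through its unit group; this forces $G\to D_\chi^\times$ with $\QQ[\text{image of }G]=D_\chi$ (since $V_\chi$ is an irreducible $\QQ G$-module, $\QQ G$ surjects onto $\End^{D_\chi}(V_\chi)=D_\chi$). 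Now one enumerates the finite subgroups $\bar G$ of $D_\chi^\times$ with $\QQ\bar G=D_\chi$ and $D_\chi$ a division algebra, running through Albert's list from Section \ref{sect:special unitary}: case (I) gives $D_\chi=\QQ$ and $\bar G\subseteq\{\pm1\}$, but $\QQ\{\pm1\}=\QQ$ already with $\bar G=\{1\}$ (case (i)); case (IV) with $\dim_{D_\chi}V_\chi=1$ forces $D_\chi$ imaginary quadratic containing a root of unity generating it, hence $\QQ(\mu_4)$ or $\QQ(\mu_3)=\QQ(\mu_6)$ (cases (iia), (iib)); case (III) forces $D_\chi$ a definite rational quaternion algebra generated by a finite subgroup of its units — these are exactly the binary polyhedral groups whose rational span is a division algebra, and a standard computation (e.g. via the classification of finite subgroups of quaternion algebras, cf. the tables around \cite{Murty} or the Vignéras-type enumeration) leaves the binary tetrahedral $2T$ of order $24$ and the binary dihedral $Q_8$ inside the Lipschitz/Hurwitz quaternions (case (iiia)), and the binary dihedral of order $12$ inside $\QQ+\QQ\sqrt3\mathbf i+\QQ\mathbf j+\QQ\sqrt3\mathbf k$ (case (iiib)); case (II), $D_\chi=M_2(\QQ)$, is not a division algebra and so does not occur. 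One should double-check that the binary octahedral and binary icosahedral groups are excluded because their rational spans are not division algebras (the icosahedral case involves $\sqrt5$ and lands in a quaternion algebra over $\QQ(\sqrt5)$, not over $\QQ$), and that the positivity of $\dagger$ rules out indefinite quaternion algebras.

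Finally, for (c), $\QQ$-irreducibility of the $\Gamma(G)$-action on $\Hcal^2(V_\chi^\dagger)$: a $\Gamma(G)$-stable $\QQ$-subspace $W$ is in particular stable under $T_\eb(R_{++})$ and $T_\fb(R_{++})$ and under $\SL_2(\ZZ)$ and under $G$. In the generic case ($\dim_{D_\chi}V_\chi\ge2$, or more simply whenever $\EU_2(V_\chi^\dagger)$ is absolutely almost simple of $K_\chi$-rank $\ge1$), the image of $\Gamma(G)$ is Zariski-dense in $\EU_2(V_\chi^\dagger)$ (by arithmeticity and the Borel density theorem, using that $\EU_2$ is semisimple with no compact factors over $\RR$ after restriction of scalars, since it contains the split torus of the hyperbolic plane), and $\Hcal^2(V_\chi^\dagger)$ is $K_\chi$-irreducible under $\EU_2(V_\chi^\dagger)$ — indeed the two opposite unipotent radicals already generate it from any nonzero vector: applying $T_\eb(R_{++})$ to a vector $\binom{v'}{v''}$ with $v''\ne0$ and varying over $R_{++}$ sweeps out $\binom{*}{v''}$, hence one reaches the line $\ZZ\fb\otimes$(span of $v''$), then $T_\fb$ sweeps back, and $G$-invariance plus $D_\chi=\QQ[\text{image of }G]$ fills out the $D_\chi$-structure; combined with restriction of scalars this gives $\QQ$-irreducibility as a $\Gamma(G)$-module. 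In the exceptional cases (i)–(iiib) one argues directly: $\Hcal^2(V_\chi^\dagger)=\Hcal^2(D_\chi)$, $\Gamma(G)$ contains $\SL_2(\ZZ)$, the transvection groups $T_\eb(R_{++}),T_\fb(R_{++})$ which are lattices in the one-dimensional-over-$K_\chi$ hermitian spaces, and the finite group $\bar G$ acting by right multiplication; since $\bar G$ generates $D_\chi$ over $\QQ$, any $\Gamma(G)$-stable $\QQ$-subspace is a $D_\chi$-subspace of $D_\chi^2=\Hcal^2(D_\chi)$ stable under $\SL_2(\ZZ)\supset T_\eb(\ZZ),T_\fb(\ZZ)$, and $\Hcal^2(D_\chi)$ has no proper such $D_\chi$-subspace (the only $\SL_2$-stable $D_\chi$-lines would have to be eigenlines, and $T_\eb,T_\fb$ have no common eigenline), so $W=0$ or $W=\Hcal^2(V_\chi^\dagger)$.

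\textbf{Main obstacle.} The delicate part is the classification step (b): pinning down exactly which binary polyhedral groups arise requires care with the interaction between (i) the division-algebra condition on $D_\chi$, (ii) the positivity of the involution $\dagger$ (which forces the quaternion algebra to be totally definite and $K_\chi$ totally real), and (iii) the requirement $\QQ\bar G=D_\chi$ rather than merely $\bar G\hookrightarrow D_\chi^\times$ — it is this last condition that both excludes $Q_8$ over $\QQ(\sqrt3)\mathbf i+\cdots$ in favor of putting it over the rational Hamilton quaternions, and forces the order-$12$ binary dihedral group into the specific ramified algebra named in (iiib). Getting the precise short list (and verifying nothing else sneaks in, e.g. from cyclic groups of order $8$ or $12$ whose rational span could be a field rather than landing here) is the real content; the arithmeticity and irreducibility are then relatively formal given the opposite-unipotents input and Borel density.
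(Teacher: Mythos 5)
Your overall strategy for the arithmeticity part tracks the paper's: generate $\EU_2(V_\chi^\dagger)$ via opposite unipotent lattices together with $\SL_2(\ZZ)$, invoke Raghunathan–Venkataramana after restriction of scalars, then pass from $\EU_2$ to $\U_2$ using Lemma~\ref{lemma:norm}. But there are two genuine gaps, one of which is exactly the point where the proposition is doing nontrivial work.

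First, the Raghunathan–Venkataramana input requires real rank $\ge 2$, and you only establish that inequality when $\dim_{D_\chi}V_\chi\ge 2$. You never say how to conclude arithmeticity when $\dim_{D_\chi}V_\chi=1$ \emph{and} the rank actually drops below $2$ — precisely the cases (i)--(iiib) the proposition is singling out. The paper handles these directly: in those cases $D_{\chi,+}=K_\chi=\QQ$, the isotropic lines of $\Hcal^2(D_\chi)$ are defined over $\QQ$, so $\EU_2(D_\chi)=\SL_2(\QQ)$, and $\Gamma_o(\chi)$ is a conjugate of $\SL_2(\ZZ)$ — hence still arithmetic without any appeal to higher rank. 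Without some such argument your arithmeticity claim is unproved in exactly the exceptional cases. Second, in step (b) you conflate ``$\dim_{D_\chi}V_\chi=1$'' with ``real rank $<2$''. These are not equivalent: a faithful character of $\ZZ/5$ has $D_\chi=\QQ(\mu_5)$, $\dim_{D_\chi}V_\chi=1$, $K_\chi=\QQ(\sqrt5)$, and the real rank of $\EU_2(D_\chi)$ (after restriction of scalars) is $2$. You assert that case (IV) ``forces $D_\chi$ imaginary quadratic'' and that case (III) gives a rational definite quaternion algebra, but these follow only from the rank constraint, which the paper extracts as ``$K_\chi$ has finitely many units'' (hence $K_\chi=\QQ$, a totally real field with finite unit group) ``and $[D_\chi:K_\chi]\le4$''. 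You identify this classification as the main obstacle in your closing remark, which is exactly right — but you leave the key reduction ($K_\chi=\QQ$, $[D_\chi:\QQ]\le4$) unjustified rather than deriving it from the rank hypothesis.

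On irreducibility: the paper's argument is uniform and much shorter — $\Hcal^2(V_\chi^\dagger)$ is the exterior tensor product of the irreducible right $\QQ G$-module $V_\chi^\dagger$ with the absolutely irreducible tautological $\SL_2(\ZZ)$-module $\ZZ^2$, and a tensor product of an irreducible with an absolutely irreducible module is irreducible. Your ``generic'' branch via Borel density is both heavier and not quite closed: Zariski density of $\Gamma_o(G)$ in $\EU_2$ makes a $\Gamma(G)$-stable $\QQ$-subspace into an $\EU_2$-stable one, but a $\QQ$-subspace invariant under the $D_\chi$-linear group $\EU_2$ need not be a $D_\chi$-subspace, so you still need the $G$-action, and at that point you have essentially re-derived the tensor-product argument in a roundabout way. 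The direct argument you give for the exceptional cases is basically that argument; it would have been cleaner to run it in all cases and drop the Borel density detour.
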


\begin{remark}\label{rem:EMS}It is well-known that the quaternion group appearing in \ref{prop:arithm1}-(iiia) is realized as  the Galois group of a torus ramified at four points (the covering surface has genus 3). This example is like a Swiss army knife for illustrating (and  refuting) statements in complex dynamics,  which  is why  that community refers to it as the \emph{eierlegende  Wollmilchsau}. We do not know whether its appearance here is just a coincidence.
\end{remark}

For the proof we need:

\begin{theorem}[Raghunathan \cite{raghunathan}, Venkataramana \cite{venky}]\label{thm:generation} 
Let $\cG$ be an almost simple,  simply connected $\QQ$-algebraic group of real rank $\ge 2$. Let $\cR_-$ and $\cR_+$ be $\QQ$-subgroups that contain the unipotent radicals of opposite $\QQ$-parabolic subgroups of 
$\cG$. Then for any pair of lattices $\Gamma_+\subset \cR_+(\QQ)$,  $\Gamma_-\subset \cR_-(\QQ)$, the subgroup of  $\cG(\QQ)$ generated by their union $\Gamma_+\cup\Gamma_-$ is a congruence subgroup of $\cG(\QQ)$.
\end{theorem}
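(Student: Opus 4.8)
The plan is to deduce this from two pillars of the arithmetic theory of semisimple groups: the generation theorem of Raghunathan (extended by Venkataramana to the $\QQ$-rank one case) that the integral points of the unipotent radicals of a pair of opposite parabolics generate a finite-index subgroup, together with the affirmative solution of Serre's congruence subgroup problem in real rank $\ge 2$. One first reduces to $\cG$ absolutely almost simple. Writing the simply connected semisimple $\cG$ as $\prod_i\Res_{k_i|\QQ}\cG_i$ with $\cG_i$ absolutely almost simple and simply connected over a number field $k_i$, the formation of parabolic subgroups, their unipotent radicals and the opposition relation is compatible with products and with restriction of scalars; the real-rank hypothesis becomes the statement that every archimedean specialization of every $\cG_i$ has rank $\ge 2$; and, using the Zariski density established below, the subgroup generated by $\Gamma_+\cup\Gamma_-$ is a congruence subgroup of the product precisely when each of its projections is. Replacing $\cR_\pm$ by $U^\pm:=\ru(P^\pm)$ for opposite proper $\QQ$-parabolics $P^\pm$ (the general case goes the same way, since $\Gamma_\pm\cap U^\pm(\QQ)$ is again a lattice in $U^\pm$ and already generates enough), and choosing an integral model of $\cG$ for which $\Gamma_\pm\subseteq U^\pm(\ZZ)$ is of finite index (possible, since every lattice in a unipotent $\QQ$-group is commensurable with, and after rescaling the model contained in, its integral points), one is reduced to showing: if $\Gamma_\pm\le U^\pm(\ZZ)$ has finite index, then $E:=\langle\Gamma_+,\Gamma_-\rangle$ is a congruence subgroup of $\cG(\ZZ)$.

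The first step is that $E$ is an arithmetic group. Zariski density of $E$ in $\cG$ is routine: each $\Gamma_\pm$ is Zariski dense in $U^\pm$ by Mal'cev, and the algebraic subgroup generated by $U^+$ and $U^-$, being normalized by both and nontrivial, exhausts the absolutely almost simple $\cG$. The substance is that $E$ has \emph{finite index} in $\cG(\ZZ)$. When the $\QQ$-rank of $\cG$ is $\ge 2$ this is Raghunathan's argument: from the commutator relations $[U_\alpha(\ZZ),U_\beta(\ZZ)]\subseteq\prod_{i,j>0}U_{i\alpha+j\beta}(\ZZ)$ among relative root subgroups, combined with the fact that in relative rank $\ge 2$ each relative root subgroup occurs as such a commutator, one gets that $E$ contains the elementary subgroup generated by all relative root subgroups over $\ZZ$, and the latter is of finite index by reduction theory and strong approximation. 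When the $\QQ$-rank is $1$ the parabolic $P^\pm$ is minimal, $U^\pm$ is in general a two-step nilpotent group, and the commutator calculus no longer suffices; this is exactly where Venkataramana's argument intervenes, using that the \emph{real} rank of $\cG$ is $\ge 2$ to produce additional unipotent one-parameter subgroups that can be conjugated into $U^\pm$ by elements defined over $\ZZ$, thereby restoring Zariski density of the elementary part, after which strong approximation and the higher-rank structure again force finite index.

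The second step upgrades ``arithmetic'' to ``congruence''. Since $\cG$ is absolutely almost simple and simply connected of real rank $\ge 2$, the congruence subgroup conjecture holds for it (Bass--Milnor--Serre, Matsumoto, Prasad--Raghunathan, Raghunathan, Rapinchuk, and others): the congruence kernel $C=\ker\big(\widehat{\cG(\ZZ)}\to\varprojlim_{\mathfrak n}\cG(\ZZ/\mathfrak n)\big)$ is finite and central. It then suffices to check that $E$ is closed in the congruence topology, i.e.\ that its closure $\overline{E}$ in $\widehat{\cG(\ZZ)}$ contains $C$. First, the closure of $E$ in the congruence completion $\cG(\widehat{\ZZ})$ is \emph{open}: at each prime $p$ the reductions $U^\pm(\mathbb{F}_p)$ already generate $\cG(\mathbb{F}_p)$ (a simply connected isotropic group over a finite field is generated by the unipotent radicals of two opposite parabolics), whence for all but finitely many $p$ one has $\langle U^+(\ZZ_p),U^-(\ZZ_p)\rangle=\cG(\ZZ_p)$ (the pro-$p$ radical being carried by the relevant commutators), while at the remaining primes the big-cell open immersion $U^-\times L\times U^+\hookrightarrow\cG$ still yields an open subgroup. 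Second, the finite central kernel $C$ is governed by the metaplectic/Steinberg formalism — it is a quotient of a group of Steinberg symbols — so the central extension $\widehat{\cG(\ZZ)}$ splits over any subgroup generated by unipotent elements among which the full set of Steinberg relations holds; hence $\overline{E}\supseteq C$. Combining the two, $E=\cG(\QQ)\cap\overline{E}$ with $\overline{E}$ open, so $E$ is a congruence subgroup; undoing the reductions of the first paragraph gives the theorem.

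The principal obstacle is the finite-index assertion of the first step in the $\QQ$-rank one case: once $P^\pm$ is a minimal $\QQ$-parabolic the elementary commutator calculus no longer manufactures enough generators, and one must genuinely exploit the real-rank $\ge 2$ hypothesis at the archimedean place, in the manner of Venkataramana. The attendant subtlety is the inclusion $\overline{E}\supseteq C$ of the second step, which expresses that the elementary subgroup realizes the full congruence completion with no metaplectic defect; everything else — the restriction-of-scalars reductions, Zariski density, and the openness of $\overline{E}$ — is formal.
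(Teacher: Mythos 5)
The paper does not prove this theorem: it is quoted as an external input, attributed to Raghunathan and Venkataramana, and the only remark the paper adds is the (formal) reduction in Remark~\ref{rem:add} from the almost-simple case stated in those references to the semisimple case as formulated here. So there is no ``paper's own proof'' to compare against; the statement is a black box in the paper's logic, and what the paper actually cares about is the consequence recorded in Remark~\ref{rem:add} (passing to the spinor cover and trading ``congruence'' for ``arithmetic'').

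Your proposal is therefore not a competing proof route but an attempted reconstruction of the contents of the cited papers. The parts that are structurally sound coincide with Remark~\ref{rem:add}: the restriction-of-scalars reduction to an absolutely almost simple group, the compatibility of parabolics and unipotent radicals with that reduction, and Zariski density via Mal'cev. Beyond that, the proposal does not stand on its own: the $\QQ$-rank one case is deferred entirely to ``Venkataramana's argument'' without giving the mechanism, and the passage from ``finite index'' to ``congruence'' is sketched via the congruence subgroup property plus a Steinberg-symbol splitting claim that is not actually justified (nor is it the way the cited papers are organized; Raghunathan's and Venkataramana's results already produce congruence subgroups directly, rather than proving arithmeticity first and then separately invoking CSP). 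There is also a small unexamined point in the reduction to $\cR_\pm=\ru(P^\pm)$: one needs that $\Gamma_\pm\cap U^\pm(\QQ)$ is a lattice in $U^\pm$, which holds because an arithmetic lattice in $\cR_\pm$ meets any $\QQ$-subgroup in an arithmetic lattice of that subgroup, but this deserves a word since $U^\pm$ need not be normal in $\cR_\pm$. In short: the proposal is a reasonable guided tour of the literature, but it has no independent mathematical content beyond what the paper achieves more economically by citation, and the CSP step as written is a genuine gap if one were to insist on a self-contained argument.
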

\begin{proof}
We first  prove the arithmeticity property of $\Gamma_o(G)$ in $\EU_2(V_\chi^\dagger)$.
Let us first observe that the group $\EU_2(V_\chi^\dagger)$ is almost-simple by 
Lemma \ref{lemma:simple}. Indeed, this can only fail if 
$D_\chi=K_\chi$ and  the form  is symmetric (with $\dim_{K_\chi}V_\chi^\dagger=2$) and this  is clearly not the case.

If the real rank of  $\EU_2(V_\chi^\dagger)$ is $\ge 2$, then the  theorem of Raghunathan-Venkataramana applies and we conclude that $\Gamma_o(\chi)$ is an arithmetic subgroup of $\EU_2(V_\chi^\dagger)$. It is then also easy to see that  $\Gamma(G)$ acts  $\QQ$-irreducibly in $\Hcal^2(V_\chi^\dagger)$.

Since the real rank of $\EU_2(V_\chi^\dagger)$ is  $\ge \dim_{D_\chi}V_\chi^\dagger$, 
it remains  to treat the case when  $\dim_{D_\chi}V_\chi^\dagger =1$. In other words,  
we can assume that $V_\chi^\dagger=D_\chi$.
The real rank of $\EU_2(D_\chi)$ is then still $\ge 2$ most of the time. As 
we saw above, the exceptions are the cases for which  $K_\chi=\QQ$ and  $D_\chi$ is  either 
$\QQ$, a definite quaternion algebra over $\QQ$ with center $\QQ$ or an imaginary quadratic extension of $\QQ$. Since $D_\chi$ is  also an irreducible  $\QQ G$-module, we have a
homomorphism $\rho: G\to D_\chi^\times$  whose image contains a $\QQ$-basis of $D_\chi$. 
In particular, $D$ is generated over $\QQ$ by its units.
So  if $D_\chi$ is an imaginary quadratic extension of $\QQ$, then $D_\chi$ is either $\QQ$, 
the Gaussian field or the Eisenstein field. In the definite quaternion case, $D_\chi^\times(\RR)$ 
is the group of unit quaternions  and hence  $\rho(G)$ is one of the  subgroups classified by Klein: 
this group must be  binary tetrahedral, 
binary octahedral,  binary icosahedral  of  binary dihedral group (of order $4n$). In these cases
$\rho(\QQ G)\cap \RR$ equals resp.\  $\QQ$, $\QQ(\sqrt{2})$,  $\QQ(\sqrt{5})$, $\QQ(\cos(\pi/n))$.
Since we want this intersection to be $\QQ$,  only the two groups listed have that property.

Note that in each of these exceptional cases, $D_{\chi, +}=K_\chi=\QQ$. The isotropic subspaces in $\Hcal^2(D_\chi)$ are defined over $\QQ$ and hence  
$\EU_2(D_\chi)\cong \SL_2(\QQ)$. The group  $\Gamma_o(\chi)$ is then a copy of $\SL_2(\ZZ)$. So $\Gamma_o(G)$  is arithmetic in $\EU_2(V_\chi^\dagger)$.
In view of Lemma \ref{lemma:norm} this also implies the arithmeticity of $\G(G)$ in $\U_2(V_\chi^\dagger)$.
The actions of $G$ (on the right) and  $\SL_2(\ZZ)$ (on the left) on $\Hcal^2(V^\dagger_\chi)$  
commute and  make $\Hcal^2(V^\dagger_\chi)$  an exterior  tensor product of  irreducible  
$\QQ$-representations: it is the  right $\QQ G$-module $V^\dagger_\chi$ tensored with the 
tautological representation of  $\SL_2(\ZZ)$ on $\QQ^2$ (which is absolutely irreducible). Hence 
$\Hcal^2(V^\dagger_\chi)$ is irreducible as a representation of $\SL_2(\ZZ)\times G$. 
This implies that $\Hcal^2(V^\dagger_\chi)$ is irreducible  as a $\G (G)$-module.
 \end{proof}

\section{An arithmeticity criterion}\label{sect:ac} 

In this section we fix a rational character $\chi\in X(\QQ G)$. We therefore suppress the subscript $\chi$ and write $D$ for $D_\chi$ and $V$ for $V_\chi$. 

Proposition  \ref{prop:arithm1} tells us that $\G (G)\subset \U_2(V^\dagger)$  is an arithmetic subgroup  which acts $\QQ$-irreducibly on  $\Hcal^2(V^\dagger)$ and  that with  a few exceptions, the group $\U_2(V^\dagger)$ is of real rank $\ge 2$.

\subsection*{Eichler transformations revisited}  Let $M$ be (left) $D$-module  of finite rank endowed with a  nondegenerate skew-hermitian  form $\la\; ,\, \ra :M\times M\to D$.  Given a $D$-submodule $N\subset M$, we denote by $\U_M(N)$  the subgroup of the  group of transformations  that act trivially on $N^\perp$. This group preserves $N$ and acts trivially on its radical $N_o=N\cap N^\perp$. Hence `restriction to $N$' defines  homomorphism $\U_M(N)\to \U(N)$.  This homomorphism  is easily shown to be onto. Its kernel consists of the unitary transformations that act trivially on $N+N^\perp$ and  one checks that this is  the image of $u(N_o)$ under $T$. We  saw that the homomorphism $\U(N)\to \U(\overline N)$ is also onto and we identified its kernel with the vector group $N_o^\dagger\otimes \overline N$. So the Levi quotient of 
$\U_M(N)$ is $\U(\overline N)$  and  its  unipotent radical  $ \ru (\U_M(N))$ is an extension of vector groups: 
\[
1\to u(N_o)\xrightarrow{T} \ru (\U_M(N))\to N_o^\dagger\otimes_D \overline N\to 0.
\]
As is clear from the Equation \eqref{eqn:comm}, this  extension is usually nontrivial. 
In case  $N_o$ is spanned by a single element $c$, then we can write this sequence  as
\begin{equation}\label{eqn:extension}
0\to c\otimes D_+c\xrightarrow{T}\ru (\U_M(N))\to  c\otimes \overline N\to 0.
\end{equation}
Any element of $\ru (\U_M(N))$ is an  Eichler transformation $E(c, a, \lambda)$ whose image 
in $c\otimes \overline N$ is $c\otimes \overline a$ (where $\overline a\in \overline N$ is the image of $a$). We will often use the following lemma.

\begin{lemma}\label{lemma:latticeprop}
Let $\G\subset \U_M(N)$ be a discrete subgroup whose image in $\U(\overline N)$ is arithmetic  and which acts $\QQ$-irreducibly in $\overline N$.
If  $\G\cap \ru (\U_M(N))$ contains an Eichler transformation $E(c, a, \lambda)$ with $a\in N\ssm D_+c$, then 
$\G\cap \U_M(N)$ is arithmetic in $\U_M(N)$.
\end{lemma}
\begin{proof} We are given that in the exact sequence of algebraic groups
\[
1\to \ru (\U_M(N))\to \U_M(N)\to \U(\overline N)\to 1
\]
the image $\overline \G$ of $\G$ in $\U(\overline N)$ is arithmetic. Hence for $\G$ to be arithmetic, it suffices that  $\G\cap \ru (\EU_M(N))$ be a lattice. For this we turn to the exact sequence \eqref{eqn:extension}. The Eichler transformation $E(c, a, \lambda)$ has image 
$c\otimes \overline a$ in $c\otimes \overline N$ and this image is nonzero by assumption.
The image of the $\G$-conjugacy class of $E(c, a, \lambda)$ in $c\otimes \overline N$ is equal to
$c\otimes \overline\G \overline a$.  Since our assumptions also imply that $\overline \G$ acts $\QQ$-irreducibly in $\overline N$, it follows that the  image of $\G\cap \ru (\U_M(N))$ in $c\otimes \overline N$ is a lattice in $c\otimes \overline N$. 

Next observe that if $E(c, a_1, \lambda_1)$ and  $E(c, a_2, \lambda_2)$ lie in $\G\cap \ru (\U_M(N))$, then so  does  their commutator, which by the identity \eqref{eqn:comm} is $T(c\otimes \lambda c)$ with $\lambda=\la a_1,a_2\ra +\la a_1,a_2\ra^\dagger$. Since  the 
$\la a_1,a_2\ra$ generate a lattice in $D$, it follows that the $\lambda$ generate a lattice in $D_+$.
In other words, the preimage of $\G\cap \ru (\U_M(N))$ in $c\otimes D_+c$ is also a lattice. Hence 
$\G\cap \ru (\U_M(N))$ is a lattice.
\end{proof}

\subsection*{Hyperbolic submodules}
If   $j:\Hcal^2(V^\dagger)\hookrightarrow M$ is an embedding of hermitian $D$-modules, then $M$ is the  orthogonal direct sum of the image of $j$  and its perp (for $\Hcal^2(V^\dagger)$ is nondegenerate),  so that  $j$ gives rise to an injective homomorphism of groups $j_*: \U_2(V^\dagger)\hookrightarrow \U(M)$. 
Let us refer to such an embedding as a \emph{$V^\dagger$-hyperbolic summand} in $M$.

The following  criterion for arithmeticity will be central to our argument.

\begin{theorem}\label{thm:transvectiongeneration}
Let  $M$ be a nondegenerate skew-hermitian $D$-module of finite rank and 
\[
a: V^\dagger\hookrightarrow M, \quad \{b: V^\dagger\hookrightarrow M\}_{b\in \Bs}
\]
a  collection of $D$-linear embeddings 
(with $\Bs$ finite, nonempty) whose images span $M$ over $D$ and are such that for each $b\in \Bs$ the pair  $(a,b)$ defines a hyperbolic summand of $M$. In case $\dim_D V^\dagger=1$ \emph{and} $\dim_DM>2$, assume in addition that there exist $b_1, b_2\in \Bs$  for which $b_1(V^\dagger)$ and $b_2(V^\dagger)$ are perpendicular.  

Then the subgroup $\G$ of $\U(M)$ generated by  $\{(a,b)_*\G(G)\}_{b\in \Bs}$ is an arithmetic subgroup of  $\U(M)$ which acts $\QQ$-irreducibly in $M$.
\end{theorem}

The proof will be by  induction on $\dim_DM$.  As may be inferred from the statement of the theorem, the case when $\dim_DV^\dagger=1$ is a bit more delicate. Indeed,  the first  induction step then requires special  care and so  we do that case first. Once we have dealt with it,  we indicate how to modify the arguments in order  to obtain a proof of the unrestricted  version of Theorem \ref{thm:transvectiongeneration}.

Let us say that a $D$-subspace $N\subset M$ is \emph{$\G$-arithmetic} if $\Gamma\cap\U_M(N)$ is arithmetic
  in $\U_M(N)$ and acts $\QQ$-irreducibly in $\overline N$ (the last property is a consequence of the first if the real rank of $\U_M(N)$ is $\ge 2$).

\subsection*{The case $\dim_DV^\dagger=1$} We then identify $V^\dagger$ with $D$ and $a$ and each $b\in\Bs$ with the image of $1\in D$ under these embeddings so that  $\la a, b\ra =1$ for all $b\in \Bs$. 
Note that $\{a\}\cup\Bs\subset M$ consists of isotropic elements  and generates $M$ over $D$.  We  write
$\G(a,b)$ for the image of $\G(G)$ under $(a,b)$, so that  $\G$ is generated by $\{\G(a,b)\}_{b\in \Bs}$. As any $b\in\Bs$ lies $\G(a,b)a$, it follows that $\{a\}\cup \Bs\subset \G a$. 

By Proposition \ref{prop:arithm1}, $Da +Db$ is $\G$-arithmetic for every $b\in \Bs$.  We therefore assume that $M$ is not of the form $Da +Db$.  So there exist $b_1, b_2\in \Bs$ with $b_2\notin Da +Db_1$ such that $\la b_1, b_2\ra=0$. 


\begin{lemma}\label{lemma:step1}
Put $N:=Da+Db_1$. Then $N':=N+Db_2$ is  $\G$-arithmetic.
\end{lemma}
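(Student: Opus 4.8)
The plan is to show that $N' = Da + Db_1 + Db_2$ is a rank-$3$ hyperbolic-type module on which $\G \cap \U_M(N')$ is large enough to apply Theorem \ref{thm:generation} (via Remark \ref{rem:add}), after first producing inside $\G$ the transvection subgroups attached to two opposite parabolics of $\EU_M(N')$. The key observation is that $b_1$ and $b_2$ are isotropic, mutually orthogonal, and each paired to $a$ by $1$; so $\{a, b_1, b_2\}$ spans a module $N'$ in which $a, b_1$ and $a, b_2$ are hyperbolic pairs and $c := b_1 - b_2$ is isotropic with $\la a, c\ra = 0$, i.e. $c \in N_o'$ where $N_o'$ is the radical of the restriction of the form to... — more precisely, one checks $\la c, c\ra = 0$, $\la a, c\ra = 0$, $\la b_1, c\ra = \la c, b_2\ra$ etc., so that $c$ spans (a rank-one piece of) the radical of $\overline{N'}$ only if the form degenerates there; in the nondegenerate case $N'$ is itself a sum of a hyperbolic plane and a rank-one anisotropic line, and the relevant unipotent radicals are generated by Eichler transformations $E(c, \cdot)$ with $c$ in the appropriate isotropic direction. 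The hypothesis that $\ZZ\G a$ is a lattice in $M$ guarantees that the groups we produce are lattices (cocompact or finite covolume) in the relevant unipotent radicals, which is what Theorem \ref{thm:generation} requires.

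First I would record that $j_{1*}\G_2$ contains $T_{b_1}(R_{++})$-type transvections $T(b_1 \otimes \lambda b_1)$ and also, since $\G_2$ contains $\SL_2(\ZZ)$ acting on the hyperbolic pair $(a, b_1)$, the whole one-parameter families of isotropic transvections $T(a \otimes \lambda a)$, $T(b_1 \otimes \lambda b_1)$, together with the ``mixed'' Eichler transformations obtained by conjugating. Likewise from $j_{2*}\G_2$ I get $T(b_2 \otimes \lambda b_2)$. The heart of the argument is the commutator identity \eqref{eqn:commutator}: applying $T(a \otimes \lambda a) \in j_{1*}\G_2$ to $b_1$ gives $b_1 - \lambda a$, hence $\G$ contains $E(c, a - \lambda b_1)$ for suitable $c$ lying in the ``small'' direction; pairing this with $E(c, b_1)$-type elements from the other summand yields $T(c \otimes 2\lambda c)$ for all $\lambda \in D_+ = \QQ$. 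Iterating this bracketing between $j_{1*}\G_2$ and $j_{2*}\G_2$ produces transvections along a full set of isotropic directions in $N'$, in particular along directions spanning the unipotent radicals $\cR_\pm$ of a pair of opposite $\QQ$-parabolics of $\EU_{N'} := \EU_M(N')$. With the lattice hypothesis $\ZZ\G a$ a lattice in $M$ (restricting to a lattice in $N'$), these transvection subgroups are lattices in $\cR_\pm(\QQ)$, so Theorem \ref{thm:generation} — applied to the simply connected cover of $\EU_{N'}$ as in Remark \ref{rem:add} — yields that the subgroup they generate is arithmetic in $\EU_{N'}$, hence (by Lemma \ref{lemma:norm}) that $\G \cap \U_M(N')$ is arithmetic in $\U_M(N')$. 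The $\QQ$-irreducibility of the action on $\overline{N'}$ follows because the transvections act with enough ``movement'': any $\G$-invariant $\QQ$-subspace containing a nonzero isotropic vector must be everything, and one rules out anisotropic invariant subspaces by a dimension/real-rank count, or is automatic once the real rank of $\U_M(N')$ is $\ge 2$.

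The main obstacle I expect is verifying that $\EU_M(N')$ genuinely has a pair of opposite $\QQ$-parabolics whose unipotent radicals are hit — equivalently, that $\overline{N'}$ is $D$-isotropic of the right dimension and that the transvection directions I can reach via \eqref{eqn:commutator} actually span those radicals rather than a proper subgroup. This is where the extra hypothesis ``$b_2 \notin Da + Db_1$'' and ``$\la b_1, b_2\ra = 0$'' are used: they ensure $\dim_D N' = 3$ (not $2$, which would land back in the already-handled Proposition \ref{prop:arithm1} case) and that $c = b_1 - b_2$ is a genuinely isotropic vector orthogonal to $a$, so that the Eichler transformations $E(c, -)$ are available and nontrivial. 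A secondary subtlety, flagged already in the paper, is that when $D$ is a quaternion algebra $\EU_{N'}$ may be $\SO_4$ or have a nontrivial fundamental group, so one must pass to the spin cover before invoking Theorem \ref{thm:generation} and settle for ``arithmetic'' rather than ``congruence'' — but Remark \ref{rem:add} has already arranged this, so it is bookkeeping rather than a real difficulty.
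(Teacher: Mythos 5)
Your proposal is built around applying Theorem \ref{thm:generation} (Raghunathan--Venkataramana) to $\EU_M(N')$, and this is exactly the step that cannot work here. First, a point you waver on but should commit to: $N' = Da + Db_1 + Db_2$ is \emph{always} degenerate under the hypotheses in force. Since $b_1, b_2$ are isotropic, mutually orthogonal, and both pair to $1$ with $a$, the Gram matrix of $(a,b_1,b_2)$ has two identical rows and the radical $N'_o$ is spanned by $c := b_2 - b_1$. There is no ``nondegenerate case'' to consider, and $\overline{N'} := N'/N'_o$ is isomorphic to $N = Da + Db_1$. Consequently $\U_M(N')$ is \emph{not} semisimple: it is an extension of $\U(\overline{N'}) \cong \U_2(D)$ by a nontrivial unipotent radical $\ru(\U_M(N'))$. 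Theorem \ref{thm:generation} requires every simple Levi factor to have real rank $\ge 2$, and precisely in the case $\dim_D V^\dagger = 1$ being treated by Lemma \ref{lemma:step1}, Proposition \ref{prop:arithm1} shows that the Levi quotient $\EU_2(D)$ may be $\SL_2(\QQ)$, of real rank $1$. So the generation theorem is not available, and in fact the proposal would beg the question it is meant to settle.

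The paper's actual proof is an induction on the \emph{unipotent radical}, not a new appeal to Raghunathan--Venkataramana. Since $\overline{N'} \cong N$ and $N$ is already $\G$-arithmetic by Proposition \ref{prop:arithm1}, it suffices to show $\G \cap \ru(\U_M(N'))$ is a lattice in $\ru(\U_M(N'))$. For this one uses the two-step filtration
\[
0 \to T\bigl(c \otimes D_+ c\bigr) \longrightarrow \ru(\U_M(N')) \longrightarrow c \otimes \overline{N'} \to 0,
\]
produces the element $E(c,b_1) = T_{b_2}T_{b_1}^{-1} \in \G \cap \ru(\U_M(N'))$ mapping to $c \otimes b_1 \ne 0$, and then uses $\QQ$-irreducibility of the $\G(a,b_1)$-action on $N$ to conclude the image in $c \otimes \overline{N'}$ is a full lattice. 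The commutator identity \eqref{eqn:commutator} applied to $(a,b_1,c)$ then fills in the center $T(c \otimes D_+c)$ of the unipotent radical. You do cite the commutator identity, and that is the right technical tool, but it is used there to populate a lattice in a \emph{unipotent} group, not to manufacture opposite-parabolic lattices to feed Theorem \ref{thm:generation}. Also note $D_+$ need not equal $\QQ$ in general; it is only $\QQ$ in the exceptional low-rank subcases of Proposition \ref{prop:arithm1}.
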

\begin{proof}  
 We verify that the  assumptions of the Lemma \ref{lemma:latticeprop} are satisfied by $\G\cap \U_M(N')$.
 It is clear that the radical of $N'$ is spanned by $c:=b_2-b_1$ so that $\overline N'$ is  the isomorphic image of $N$. We know that  $N$ is $\G$-arithmetic and  so  $\G$ has arithmetic image in $\U (\overline N')$. Since $T_{b_1}$ and $T_{b_2}$ lie in $\G$, so does $ T_{b_2}T_{b_1}^{-1}$. We check that 
 \[
T_{b_2}T_{b_1}^{-1}(x)=x-\la x, b_1\ra +\la x, b_1+c\ra (b_1+c)=E(c, b_1,1)(x).
\]
So the image of $\G\cap \ru( \U_M(N'))$ in $c\otimes \overline N'$ contains $c\otimes b_1$. Now apply Lemma \ref{lemma:latticeprop}.
\end{proof}

From this point onward the argument will be inductive. The union of  Lemmas \ref{lemma:step2} and \ref{lemma:step3} will establish the theorem  in case $\dim_DV^\dagger=1$. 

\begin{lemma}\label{lemma:step2}
Let $N\subsetneq M$ be  a $D$-subspace which contains $a, b_1, b_2$
and whose the radical is of $D$-dimension one.  If $N$ is $\G$-arithmetic, then there exists a $b\in \Bs$  such that $N':=N+Db$ is nondegenerate and $\G$-arithmetic, and  the real rank of  $\U(N')$ is $\ge 2$.
\end{lemma}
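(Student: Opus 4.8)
The plan is to enlarge $N$ by one more hyperbolic vector $b\in\Bs$ chosen to break the radical, so that $N'=N+Db$ is nondegenerate, and then to verify $\G$-arithmeticity of $N'$ by the same two-step mechanism used in Lemma \ref{lemma:step1}: first that $\G\cap\U_M(N')$ surjects (up to finite index) onto $\U(\overline{N'})=\U(N')$, and second that it captures a lattice in the unipotent radical $\ru(\U_M(N'))$. Concretely, let $c$ span the one-dimensional radical $N_o$ of $N$. Since $\{a\}\cup\Bs$ generates $M$ over $D$ and $N\subsetneq M$, and since $c$ is in the radical of $N$ but $N$ is not in the radical of $M$ (as $M$ is nondegenerate), there must be some $b\in\Bs$ with $\la c,b\ra\ne 0$; after scaling we may arrange this pairing to be a unit, and then $c,b$ (or a small modification $b-\tfrac12\la b,b\ra c$ of $b$) form a hyperbolic pair in $N'$. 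Because the radical $N_o=Dc$ pairs nontrivially with $b$, we get $N'_o=0$, i.e. $N'$ is nondegenerate, and $\overline{N}\cong N/Dc$ embeds as a nondegenerate summand of $N'$ with $N'=\overline{N}\perp(\text{hyperbolic plane on }c,b)$ up to isometry; in particular $\dim_D N'=\dim_D N+1$ (we replace one radical line by a hyperbolic plane, adding net $D$-dimension one over the semisimplification). Hence $\dim_D\overline{N'}\ge 3$ unless $\overline N$ was trivial, and as $\overline N$ contains the hyperbolic plane $\overline{Da+Db_1}$ it has $D$-dimension $\ge 2$, so $\dim_D\overline{N'}\ge 3$ and the real rank of $\U(N')\supset\EU(\overline{N'})$ is $\ge 2$ (here one uses that the real rank of $\EU$ of a nondegenerate module of $D$-dimension $\ge 2$ containing an isotropic vector is at least $\lfloor \dim_D/2\rfloor\ge 1$, and strictly $\ge 2$ once $\dim_D\ge 3$ except possibly in the $\SO_4$-case, which is excluded since here $\dim_D\overline{N'}\ge 3$).

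For the arithmeticity itself I would argue as follows. Consider the exact sequence
\[
0\to u(N_o)\xrightarrow{T}\ru(\EU_M(N))\to N_o^\dagger\otimes\overline N\to 0,
\]
and the analogous data for $N'$; since $N'_o=0$, $\U_M(N')\cong\U(N')$ is already reductive, so there is no unipotent radical to deal with for $N'$ itself — what needs checking is that $\G\cap\U_M(N')$ is arithmetic in the reductive group $\U(N')$. The idea is to use the subgroup $\G\cap\U_M(N)$ (arithmetic in $\U_M(N)$ by hypothesis) together with $\G(a,b)\cong\G_2$, and the Raghunathan–Venkataramana generation theorem (Theorem \ref{thm:generation}), in the form of Remark \ref{rem:add}. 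I would exhibit inside $\U(N')$ two $\QQ$-subgroups $\cR_\pm$ containing the unipotent radicals of opposite $\QQ$-parabolics, one coming from the "old" data attached to $N$ and one from the hyperbolic summand on $\{a,b\}$: note $\U_M(N)$ fixes $N^\perp$ hence, restricted to $N'$, stabilizes a parabolic of $\U(N')$ (the stabilizer of the isotropic line $Dc$, whose unipotent radical $\cR_+$ is $c^\dagger\otimes\overline N\oplus u(Dc)$), and $\G\cap\U_M(N)$ is an arithmetic lattice in (the Levi times unipotent radical of) that parabolic; meanwhile $\G(a,b)$ contains the opposite transvection group $T_b(R_{++})$ which, after restricting scalars, is a lattice in an opposite unipotent $\cR_-$. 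Since $\U(N')\supset\EU(\overline{N'})$ has all simple Levi factors of real rank $\ge 2$ (by the rank count above, passing to the simply connected cover if $D$ is quaternionic as in Remark \ref{rem:add}), Theorem \ref{thm:generation} yields that the group generated by these two families is a congruence, hence arithmetic, subgroup of $\EU(\overline{N'})$; combined with Lemma \ref{lemma:norm} (which bounds the image in $\U/\EU$) and with the already-arithmetic part sitting over $N$, this gives that $\G\cap\U_M(N')$ is arithmetic in $\U(N')$. The $\QQ$-irreducibility of the $\G$-action on $\overline{N'}$ then follows automatically, since real rank $\ge 2$ arithmetic groups in an almost-$\QQ$-simple group act $\QQ$-irreducibly on the defining module, and $\EU(\overline{N'})$ is almost $\QQ$-simple here.

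The existence of a suitable $b\in\Bs$ deserves one more word: a priori $b$ is only required to lie in $\Bs$ (so $\la a,b\ra=1$, $\la b,b\ra=0$), and we need $\la c,b\ra\ne0$ with $c$ spanning $N_o$. If every $b\in\Bs$ were orthogonal to $c$, then — since $a$ is orthogonal to $c$ as well (because $c\in N_o\subset N$ and $a\in N$, and $c$ pairs trivially with all of $N$) — the element $c$ would pair trivially with the $D$-span of $\{a\}\cup\Bs=M$, contradicting nondegeneracy of $M$. So such a $b$ exists; replacing it by $b-\tfrac12\la b,b\ra'c$ where $'$ absorbs the needed normalization does not change membership in the relevant data and makes $\{c,b\}$ a genuine hyperbolic pair, completing the reduction. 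I expect the main obstacle to be the bookkeeping in the previous paragraph — precisely identifying $\cR_+$ as the unipotent radical of the parabolic of $\U(N')$ stabilizing $Dc$ and matching $\G\cap\U_M(N)$ with a lattice surjecting onto both its Levi part and this unipotent radical, so that the hypotheses of Theorem \ref{thm:generation} are literally met (including the passage to the simply connected cover in the quaternionic case and checking that the parabolics one writes down are genuinely opposite and defined over $\QQ$); the rank estimate for $\U(N')$ and the existence of $b$ are comparatively routine.
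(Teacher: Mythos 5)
Your overall plan matches the paper's in spirit (find opposite parabolic data in $\U(N')$ and invoke Raghunathan--Venkataramana), and your existence argument for $b\in\Bs$ with $\la c,b\ra\ne 0$ is exactly the paper's. But there is a genuine gap in the way you feed Theorem~\ref{thm:generation}. You propose to take $\cR_+$ roughly as the parabolic/unipotent data attached to $Dc$ (where $\G\cap\U_M(N)$ gives a lattice), and $\cR_-$ to be ``an opposite unipotent'' populated by the transvection group $T_b(R_{++})\subset\G(a,b)$. The problem is that $T_b(R_{++})$ is only a lattice in the \emph{center} $T(Db\otimes_D D_+ b)$ of the unipotent radical of the $\U(N')$-stabilizer of $Db$: writing $N'=Dc\oplus W\oplus Db$ with $W=(Dc+Db)^\perp$, that unipotent radical consists of all Eichler transformations $E(b,w,\lambda)$ with $w\in W$, of which the transvections $T_b(\lambda)$ are just the $w=0$ part. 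Since $\dim_D N'\ge 3$, $W\ne 0$, so you do not have a lattice in any $\QQ$-subgroup \emph{containing} the unipotent radical of an opposite parabolic, which is what Theorem~\ref{thm:generation} requires. The paper sidesteps this precisely by working with $c' := T_b(c)$ rather than $b$: since $\la c',c\ra\ne 0$, the stabilizers $\U(N')_c$ and $\U(N')_{c'}$ contain the unipotent radicals of genuinely opposite parabolics, and $\U(N')_{c'}=T_b\,\U(N')_c\,T_b^{-1}$ is a $\G$-conjugate of $\U(N')_c$, so arithmeticity carries over in one stroke. Your argument can be repaired along these lines, but as written the opposite side is not filled out.

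Two smaller points. First, your real-rank estimate is asserted via a dimension count (``$\dim_D\overline{N'}\ge 3$'' implies real rank $\ge 2$''), which is not a correct general implication --- what controls the rank is the number of hyperbolic planes (Witt index). The setup you describe does actually exhibit two orthogonal hyperbolic planes inside $N'$ (one on $\bar a,\bar b_1$ in $\overline N$, one on $c,b$), which is what the paper uses and is what you should cite; the conclusion is right but the stated reason is wrong. Second, you write ``replacing $b$ by $b-\frac12\la b,b\ra c$'': unnecessary, since every $b\in\Bs$ satisfies $\la b,b\ra=0$ by definition.
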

\begin{proof}
Let $c\in N$ span the  radical of $N$.
Since $M$ is nondegenerate and $D$-spanned by $\{a\}\cup\Bs$,  there must exist a $b\in \Bs$ such that $c$ is not in the radical of 
$N':=N+Db$. Then it is easily seen that $N'$ is nondegenerate so that $\U_M(N')\cong \U(N')$.  
In case $N=Da +Db_1+Db_2$ (where we can take $c=b_2-b_1$) one checks that $N$ is the perpendicular sum of two copies of  $\Hcal^2(D)$. Otherwise, $N'$ contains such a sum. This implies that 
$\U(N')$ has real rank $\ge 2$.  

The $\U(N')$-stabilizer $\U(N')_c$ of $c$ is equal to $\U_M(N)$ and hence contains
$\G\cap \U(N')_c$ as an arithmetic subgroup. Observe that $c':=T_b(c)=c+\la c,b\ra b$ is another isotropic element 
with $\la c',c\ra=\la c,b\ra \la b,c\ra\not=0$ and so $Dc+Dc'\cong \Hcal^2(D)$.   
The two $\U(N')$-stabilizers of $Dc$ and $Dc'$ are opposite parabolic subgroups of $\U (N')$ whose unipotent radicals are contained in  $\U(N')_{c}$ resp.\  $\U(N')_{c'}$. Since $\U(N')_{c'}$ is a $\G$-conjugate of $\U(N')_c$, it follows that
$\G\cap \U(N')_{c'}$ is an arithmetic subgroup of $\U(N')_{c'}$. We have thus  satisfied  the hypotheses of Theorem \ref{thm:generation} and we conclude that $\G\cap \U(N')$ is arithmetic in $\U(N')$. The fact that 
$\G\cap \U(N')$ acts $\QQ$-irreducibly in $N'$ follows from the fact  that 
$\G\cap \U(N)$ has this property in $N$,  for  the $\G\cap \U(N')$-translates of $N$ span $N'$ over $\QQ$, but do not decompose $N'$.
\end{proof}

\begin{lemma}\label{lemma:step3}
Let $N\subsetneq M$ be  a proper nondegenerate $D$-subspace of dimension $\ge 4$ and    contain $a, b_1, b_2$. If $N$ is $\G$-arithmetic, then so is $N':=N+Db'$ for every $b'\in \Bs\ssm N$.
\end{lemma}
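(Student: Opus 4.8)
The plan is to reduce the claim that $N' = N + Db'$ is $\G$-arithmetic to an application of Theorem~\ref{thm:generation}, with Lemma~\ref{lemma:step2} providing the starting point for the induction. First I would note that since $N$ is already $\G$-arithmetic and nondegenerate of dimension $\ge 4$, the group $\U_M(N) \cong \U(N)$ has real rank $\ge 2$ and $\G \cap \U(N)$ acts $\QQ$-irreducibly on $N$; these hypotheses will be carried inductively. Since $b' \notin N$ and $M$ is nondegenerate, $N'$ is a $D$-subspace strictly containing $N$, and I would first analyze its radical $N'_o = N' \cap N'^\perp$: because $N$ is nondegenerate, $N'_o$ has $D$-dimension at most $1$, and it is spanned by some vector $c$ of the form $b' + n$ with $n \in N$ (if it is nonzero). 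In the degenerate case, the exact sequence
\[
0 \to u(N'_o) \xrightarrow{T} \ru(\U_M(N')) \to (N'_o)^\dagger \otimes \overline{N'} \to 0
\]
applies, and I would handle it exactly as in Lemma~\ref{lemma:step1}: find a hyperbolic pair $(a,b_1)$ inside $N$ (which exists since $\dim_D N \ge 4$ and $N$ is nondegenerate, hence isotropic) with $c \in N'_o$, use the Eichler element $E(c,b_1) = T_{b_1+c}T_{b_1}^{-1}$ (or the appropriate transvection product) to hit a lattice in $(N'_o)^\dagger \otimes \overline{N'}$ via the $\QQ$-irreducible action of $\G \cap \U(N)$, and use the commutator identity \eqref{eqn:commutator} applied to a triple $(a, b_1, c)$ to get that $\G \cap T(u(N'_o))$ is a lattice. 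This shows $\ru(\U_M(N'))$ meets $\G$ in an arithmetic lattice, and combined with $\G$-arithmeticity of $N$ (which maps isomorphically onto $\overline{N'}$) this gives $\G$-arithmeticity of $N'$.

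In the nondegenerate case, $N'$ has dimension $\dim_D N + 1 \ge 5$, so $\U(N')$ is absolutely almost-simple of real rank $\ge 2$ (it contains the orthogonal sum of at least two copies of $\Hcal^2(D)$ sitting inside $N$). The strategy is again to exhibit two arithmetic subgroups of $\G \cap \U(N')$ that generate $\U(N')$ via Theorem~\ref{thm:generation}. For this I would fix an isotropic $c \in N$ and consider $N$ itself as the stabilizer data: the $\U(N')$-stabilizer $\U(N')_{Dc}$ of the isotropic line $Dc$ is a $\QQ$-parabolic, and its unipotent radical $\cR_+$ is contained in $\U_M(N)$ for a suitably chosen complement — more precisely I want a hyperbolic pair $(c, c^*)$ with $c^* \in N$, and then $N$ contains $Dc + Dc^*$ plus its perp inside $N$. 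The key point: the unipotent radical of the parabolic stabilizing $Dc$ acts trivially on $(Dc)^\perp/Dc$-type data only partially, so instead I would mimic Lemma~\ref{lemma:step2} directly: pick an isotropic $c$ with $N \subset c^\perp$ failing, i.e., arrange $c, c'$ isotropic in $N'$ with $\la c, c'\ra \ne 0$ and both $Dc$-stabilizer and $Dc'$-stabilizer parabolics having unipotent radicals inside $\U(N')_c$, $\U(N')_{c'}$ respectively. Since $N$ is $\G$-arithmetic and $\U_M(N)$ stabilizes the appropriate $c$, one gets $\G \cap \U(N')_c$ arithmetic; conjugating by an element of $\G$ moving $Dc$ to $Dc'$ gives the opposite one; Theorem~\ref{thm:generation} then yields that $\G \cap \U(N')$ is arithmetic in $\U(N')$, and $\QQ$-irreducibility follows because the $\G \cap \U(N')$-translates of $N$ span $N'$ but $N'$ is indecomposable.

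The main obstacle I expect is the bookkeeping needed to guarantee that $\U_M(N)$ actually contains the unipotent radical of a $\QQ$-parabolic subgroup of $\U(N')$ — this requires choosing the isotropic vector $c$ and its hyperbolic partner inside $N$ so that $(Dc)^\perp \cap N'$ decomposes compatibly, and one must check that $b'$ (or rather the projection of $b'$ onto $N^\perp$ within $N'$) does not obstruct this. Concretely, the stabilizer of $Dc$ in $\U(N')$ has unipotent radical naturally identified with $(Dc)^\dagger \otimes ((Dc)^\perp/Dc)$ together with a transvection piece, and I need the part of this coming from $N$ to exhaust a $\QQ$-parabolic's unipotent radical in $\U(N')$ — this is where the hypothesis $\dim_D N \ge 4$ is used, since then $(Dc)^\perp \cap N$ already has $D$-dimension $\ge 2$ beyond $Dc$, enough room to contain a hyperbolic partner for $c$ and still be nondegenerate, so that $N$ contains a full "large" parabolic datum relative to any of its isotropic lines. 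The remaining verifications — that $\ZZ\G a$ stays a lattice, that the real rank stays $\ge 2$ under each extension, that irreducibility propagates — are routine given the earlier lemmas, and the induction terminates because $M$ is finite-dimensional over $D$ and $\{a\} \cup \Bs$ spans $M$, so after finitely many steps $N' = M$.
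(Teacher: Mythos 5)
Your high-level plan (split into degenerate/nondegenerate $N'$, in each case reduce to exhibiting lattices in opposite parabolic data and invoke Theorem~\ref{thm:generation}) is the right one and matches the paper's structure, but both cases have a genuine gap where the proposal asserts the key fact instead of proving it. In the degenerate case, you cannot simply "use the Eichler element $E(c,b_1)=T_{b_1+c}T_{b_1}^{-1}$ as in Lemma~\ref{lemma:step1}": that identity required \emph{both} $b_1$ and $b_2=b_1+c$ to lie in $\Bs$ (so that both transvections lie in $\Gamma$ by construction), and here $b_1+c'$ has no reason to be in $\Bs$, nor is $T_{b_1+c'}$ obviously in $\Gamma$. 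The paper instead takes the unique $b\in N$ with $\la x,b'\ra=\la x,b\ra$ for all $x\in N$ (so $c'=b'-b$ spans the radical and $b$ is isotropic because $\la b',b'\ra=\la b,b\ra$), chooses $n$ with $T_{nb}\in\Gamma$ (this is where the hypothesis that $\Gamma\cap\U(N)$ is arithmetic is used), and then the element is $T_{nb'}T_{nb}^{-1}=E(nc',nb,1)\in\Gamma$. Your "(or the appropriate transvection product)" hedge is precisely where the proof lives.

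In the nondegenerate case the gap is more serious. The central claim you make --- that "$\U_M(N)$ actually contains the unipotent radical of a $\QQ$-parabolic subgroup of $\U(N')$" --- is false: the unipotent radical of the $\U(N')$-stabilizer of $Db'$ (or of any isotropic line meeting $N'\ssm N$) consists of Eichler maps $E(b',x)$ that add multiples of $b'\notin N$, so they do not preserve $N$ and hence are not in $\U_M(N)$. Consequently the assertion "Since $N$ is $\G$-arithmetic and $\U_M(N)$ stabilizes the appropriate $c$, one gets $\G\cap\U(N')_c$ arithmetic" does not follow; this is exactly the nontrivial step. The paper proves $\Gamma\cap\U(N')_{b'}$ arithmetic by working with $N'_1:=N'\cap b'{}^\perp$ (whose radical is $Db'$, with $N_1:=N\cap b'{}^\perp$ nondegenerate and mapping isomorphically onto $\overline{N'_1}$), then analyzing the extension
\[
0\to b'\otimes D_+b'\xrightarrow{T}\ru(\U_M(N'_1))\to b'\otimes N_1\to 0,
\]
supplying $T_{b'}\in\Gamma$ for the left end, and constructing $E(b',c')\in\Gamma$ with $c'=\gamma(b_2-b_1)$ for a suitable $\gamma\in\Gamma(a,b')\Gamma(a,b_1)$ taking $b_1$ to $b'$ to hit the right end; $\QQ$-irreducibility of $\Gamma\cap\U(N_1)$ on $N_1$ (itself a consequence of it being the stabilizer of a nonisotropic vector in $N$) then fills out the lattice. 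None of this structure appears in your sketch, and the inductive step cannot close without it.
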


\begin{proof}[Proof in case $N'$ is degenerate] 
We verify that the  assumptions of the Lemma \ref{lemma:latticeprop} are satisfied by $\G\cap \U_M(N')$.
The radical of $N'$ is necessary spanned by an element of the form  $c:=b'-b$ where  
$b\in N$ is characterized by the property that $\la x, b\ra=\la x, b'\ra$ for all $x\in N$. So $N$ maps then isomorphically onto $N'/Dc=\overline N'$. In particular, the natural map $\U(N)\to \U(\overline N')$ is an isomorphism and hence $\G\cap \U_M(N')$  
maps onto an arithmetic subgroup of $\U(\overline N')$. 

Let $n$ be a positive  integer such that $T_{b}^n\in \G$.  Then
\[
T_{b'}^nT^{-n}_{b}(x)=x-n\la x,b\ra b+n\la x,b'\ra b'=x+ n\la x,b\ra c+n\la x,c\ra b +n\la x,c\ra c
=E(c,nb,n)(x)
\]
So $E(c,nb,n)\in \G\cap\ru(\U_M(N'))$ and this element has image $c\otimes nb$ in $c\otimes \overline N'$. It then follows from  Lemma  \ref{lemma:latticeprop} that $N'$ is $\G$-arithmetic.
\end{proof}

\begin{proof}[Proof in case $N'$ is nondegenerate] 
Then $N'_1:=N'\cap b'{}^\perp$ is degenerate with radical spanned by $b'$. We first prove that 
$N'_1$ is $\G$-arithmetic by  verifying the  assumptions of the Lemma \ref{lemma:latticeprop}.
The  subspace $N_1:=N\cap b'{}^\perp$ supplements $b'$ in $N'_1$. It  is therefore  nondegenerate and maps isomorphically onto $\overline N'_1=N'_1/Db'$. This enables us to regard  $\U(N_1)$ as  a  subgroup of $\U_M(N'_1)$ that acts trivially on both $b'$ and its orthogonal projection in $N$.

Since $\G\cap \U_M(N)$ is arithmetic in $\U_M(N)$, its subgroup $\G\cap \U(N_1)$ is  arithmetic in 
$\U(N_1)$ and  has  arithmetic image in $\U(\overline N'_1)$. We show that $E(b',c')\in \G$ for some $c'\in N'_1\ssm Db'$. Then  Lemma  \ref{lemma:latticeprop} will imply that $N'_1$ is $\G$-arithmetic.

For this  we recall that $c:=b_2-b_1$ is perpendicular to
$b_1$ and has nonzero image in $D b_1^\perp/Db_1$.
Let  $\gamma\in \G(a,b')\G(a,b_1)\subset \G(N')$ 
take $b_1$ to $b'$. Since $c$ is perpendicular to $b_1$,   $c':=\gamma(c)\in N'$  is perpendicular to $\gamma (b_1)=b'$ (so lies in $N'_1$) and
has nonzero image $[c']$ in $\overline N'_1\cong N_1$. 
Since $E(b',c')=E(c', b')^{-1}$ is a  $\G$-conjugate of $E(c, b_1)^{-1}$,  it lies in $\G$.

Now is $\U(N')_{b'}=\U(N'\cap b'{}^\perp)$ and so $\G\cap \U(N')_{b'}$ is arithmetic in
$\U(N')_{b'}$. As $a\in \G b'$, the same is true for $\G\cap \U (N')_{a}$. Since $a$ and $b'$ span a copy of $\Hcal^2(D)$, their $\U(N')$-stabilizers contain the unipotent radicals of  opposite parabolic subgroups  of $\U (N')$. The real rank of $\U (N')$ is $\ge 2$, so that Theorem \ref{thm:generation} applies and tells us that $N'$ is $\G$-arithmetic.
\end{proof}
 
\subsection*{The case when $\dim_DV^\dagger>1$}
The same scheme  for the proof works when $\dim_DV^\dagger>1$. The difference is that we deal with  larger hyperbolic packets, to wit, the  images of hyperbolic embeddings $(a,b):\Hcal^2(V^\dagger)\hookrightarrow M$. The essential difference is that we start off in a better position, since  we begin with a $V^\dagger$-hyperbolic embedding $j:\Hcal^2(V^\dagger)\hookrightarrow M$ and we already know that  its image  $N$ is $\G$-arithmetic and that $\U_M(N)\cong \U(N)$ has real rank $\ge 2$.

\section{Finding liftable mapping classes}\label{sect:liftable}
In this section the $G$-covering  $S\to S_G$ is as in the introduction and $A\subset S_G^\circ$ is a nonempty closed $1$-submanifold
such that the covering is trivial over $A$ and is connected over $S\ssm A$. We also choose a connected component $\alpha$ of $A$, so that  $A':=A\ssm \alpha$ might be empty.
We orient $\alpha$ and regard it as  the oriented image of an embedding of the circle in $S_G^\circ$. We will see that this gives rise to enough copies of $\G(G)$ in the representation 
of the $G$-equivariant mapping classes as to satisfy the hypotheses of our arithmeticity criterion Theorem  \ref{thm:transvectiongeneration}.
\\

We denote by $S_G(\alpha)$ the singular surface obtained from $S_G$ by contracting 
$\alpha$ to a point (that we will denote by $\infty$). Its topological normalization is a closed connected surface, denoted $\widehat S_G(\alpha)$, whose genus is one less than that of $S_G$. The surface $\widehat S_G(\alpha)$
comes with two points over $\infty$ and the orientation of $\alpha$ enables us to tell them apart: we let  $p_-$ be  `to the left' of $\alpha$  and $p_+$ 
is `to the right' of $\alpha$. If  we regard $A'$ also as a submanifold of $\widehat S_G^\circ(\alpha)$, then the surjection $\widehat S_G^\circ(\alpha)\to S^\circ_G(\alpha)$ defines a map from the set 
$\Pi(\widehat S_G^\circ(\alpha)\ssm A';p_-, p_+)$ of path homotopy classes in $\widehat S_G^\circ(\alpha)\ssm A'$ from $p_-$ to $p_+$ to the fundamental group $\pi_1(S^\circ_G(\alpha), \infty)$. This map is injective. 
We do the same  (in a $G$-equivariant manner) for the preimage of  $S_G\ssm\alpha$ in $S$ and thus get $G$-covers $S(\alpha)\to 
S_G(\alpha)$, $\widehat S^\circ(\alpha)\to \widehat S_G(\alpha)$  and $G$-orbits $P_\infty \subset S(\alpha)^\circ$, $P_\pm\subset \widehat S^\circ(\alpha)$, so that  we end up with the diagram below (in which the vertical maps are $G$-coverings):
\begin{center}
\begin{tikzcd}[row sep=scriptsize, column sep=scriptsize]
\pi^{-1}\alpha\arrow[dr, hook]\arrow[dd, two heads] \arrow[rr, two heads] & & P_\infty\arrow[dr,hook]\arrow[dd, two heads] & &P_-\cup P_+\arrow[dr, hook]\arrow[dd, two heads]\arrow[ll,two heads]&\\
& S\arrow[rr, crossing over, two heads]  && S(\alpha)&&\widehat S(\alpha)\arrow[ll, crossing over, two heads]\arrow[dd]\\
\alpha\arrow[dr, hook]\arrow[rr, crossing over, two heads] & & \{\infty\}\arrow[dr,hook] & &\{p_-, p_+\}\arrow[dr, hook]\arrow[ll,two heads]&\\
&S_G\arrow[from=uu, crossing over, two heads]\arrow[rr,two heads] && S_G(\alpha)
\arrow[from=uu, crossing over, two heads] && \widehat S_G(\alpha)\arrow[ll,two heads]\\
\end{tikzcd}
\end{center}
This construction comes with $G$-equivariant bijections $P_-\cong P_\infty\cong P_+$. Our assumption  on the covering  $S\to S_G$ amounts to the following two properties: (i)  $\widehat S(\alpha)$ is connected and stays so if we remove the preimage of $A'$ and (ii) the three  $G$-orbits $P_\infty$, $P_\pm$ are regular. So the choice of a point in $P_\infty$ (which is equivalent to the choice of a lift $\tilde\alpha$ of $\alpha$) identifies these $G$-sets with $G$ (on which $G$ acts by left translation). In particular, we thus  identify  $\Iso_G(P_-,P_+)\cong\aut_G(P_\infty)$ with $G$ (where $g\in G$  acts on $G$ by right translation over $g^{-1}$).

\begin{figure}
\centerline{\includegraphics[scale=0.250]{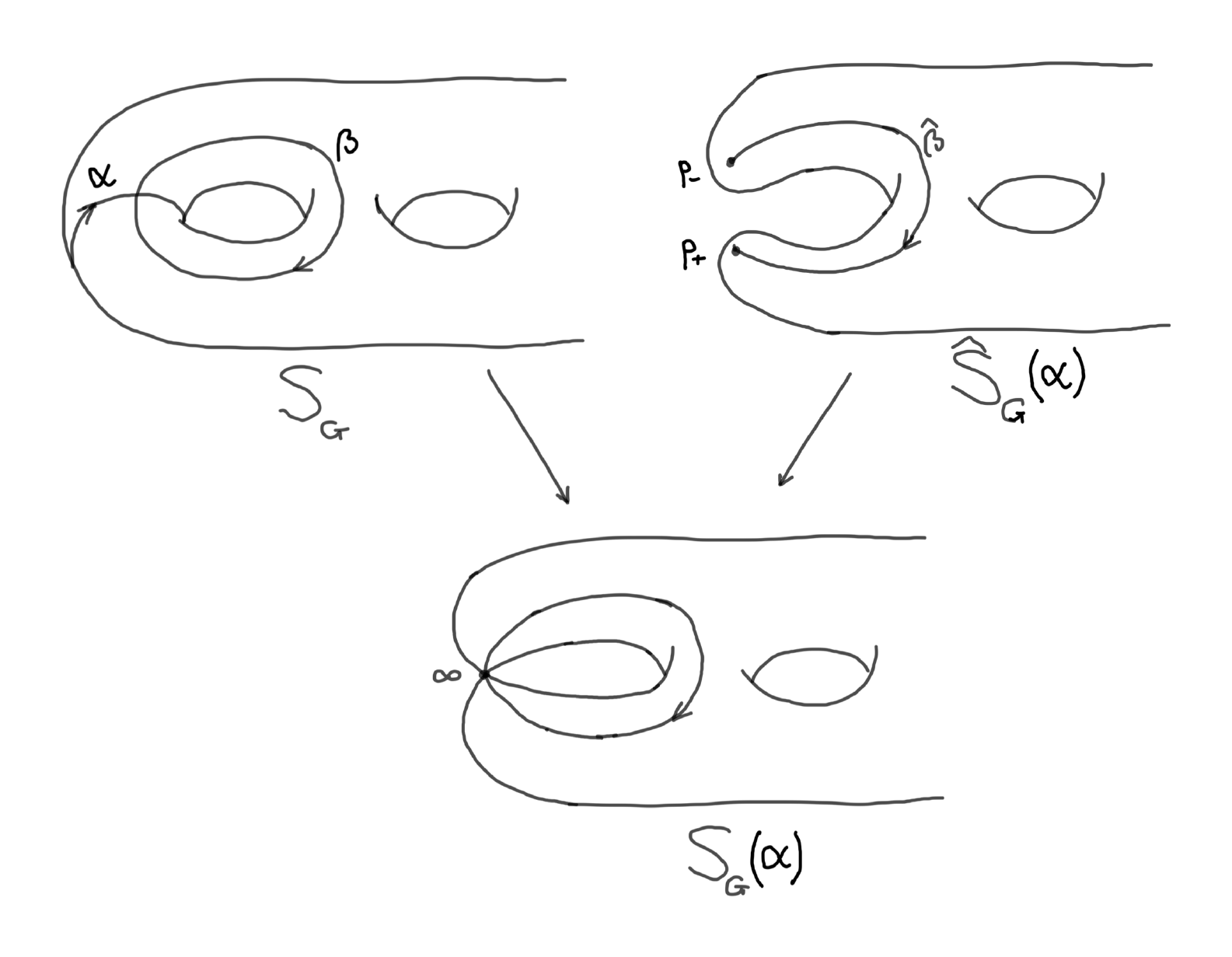}}
\caption{\footnotesize  The surface $S_G$, its quotient $S_G(\alpha)$ and the normalization $\widehat S_G(\alpha)$.}
\label{fig:surfacepic}
\end{figure}

For any  path in $\widehat S_G^\circ(\alpha)\ssm A'$ from $p_-$ to $p_+$, the $G$ covering  is trivial over it, so that we have  an associated  $G$-bijection $P_-\cong P_+$. Since the $G$-covering over $\widehat S_G^\circ(\alpha)\ssm A'$ is connected, the resulting  map  
\[
\Pi(\widehat S_G^\circ(\alpha)\ssm A' ;p_-,p_+)\to \Iso_G(P_-,P_+)\cong \aut_G(P_\infty)
\] 
is onto by standard covering theory. We will say that
an element of $\Pi(\widehat S_G^\circ(\alpha)\ssm A'; p_-, p_+)$ \emph{is $G$-trivial} if its image in $\aut_G(P_\infty)$ under the above map is the identity. Such elements make up a coset for
the kernel of the natural homomorphism $\pi_1(\widehat S_G^\circ(\alpha)\ssm A',p_-)\to G$.

\begin{lemma}\label{lemma:Ghit}
Every element  of $\Pi(\widehat S_G^\circ(\alpha)\ssm A'; p_-, p_+)$ is representable by some arc ($=$ embedded unit interval) in $\widehat S_G^\circ(\alpha)\ssm  A' $ from $p_-$ to $p_+$.
We can arrange that this arc lifts to an embedding of the circle $\RR/\ZZ$ in $S^\circ_G$ which meets $\alpha$ in a single point with intersection number $1$. In particular, every element of
$\aut_G(P)$ is realized by the monodromy along an embedded circle $\beta$ which does not meet $A'$ and meets $\alpha$ in one point only and does so transversally with intersection number  $1$. 
\end{lemma}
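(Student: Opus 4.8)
The plan is to reduce everything to a statement about a single path-homotopy class on the punctured normalized surface $\hat S_G^\circ(\alpha)\ssm A'$, and then upgrade a representing path to an embedded arc, and finally to an embedded circle on $S_G^\circ$ meeting $\alpha$ once. First I would fix a path-homotopy class $[\gamma]\in\Pi(\hat S_G^\circ(\alpha)\ssm A';p_-,p_+)$. Note that $\hat S_G^\circ(\alpha)$ is a connected surface with negative Euler characteristic and at least two punctures ($p_-$, $p_+$, plus the punctures coming from $A'$ and from the orbifold/regular-orbit locus), and both $p_-$ and $p_+$ are distinct punctures. A standard fact from surface topology is that on a surface with boundary (or punctures) any path-homotopy class of paths between two \emph{distinct} boundary/puncture points contains an embedded representative; this is because one can apply an innermost-bigon reduction to a transverse self-intersection of a representing path, reducing self-intersection number while staying in the same class, and the minimal position is then embedded. (When the two endpoints coincide one must worry about the path doubling back, but here $p_-\ne p_+$, so there is no such issue.) This gives the first sentence of the lemma.

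Next I would push this embedded arc forward under $\hat S_G^\circ(\alpha)\to S_G^\circ(\alpha)$. The map sends $p_-$ and $p_+$ both to the contracted point $\infty\in S_G(\alpha)$, so the image of our arc is a loop at $\infty$; undoing the contraction of $\alpha$, this loop is carried to a loop in $S_G$ that crosses $\alpha$ once, passing from the left side (near $p_-$) to the right side (near $p_+$). Concretely: take a small collar neighborhood of $\alpha$ in $S_G^\circ$, identified with $\alpha\times(-1,1)$ with $\alpha\times\{0\}=\alpha$, so that $p_-$ corresponds to the $(-1)$-end and $p_+$ to the $(+1)$-end after contraction. The embedded arc in $\hat S_G^\circ(\alpha)$ entering $p_-$ and leaving from $p_+$ becomes, in $S_G^\circ$, an arc whose two ends run into this collar from the $\alpha\times\{-\epsilon\}$ side and the $\alpha\times\{\epsilon\}$ side respectively; I can then join them by a short transverse segment $\{x\}\times(-\epsilon,\epsilon)$ crossing $\alpha$ exactly once, obtaining an embedded circle $\beta\subset S_G^\circ$ with $\beta\cap\alpha$ a single transverse point of intersection number $1$. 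Since the original arc avoided $A'$ and $A'$ lies away from $\alpha$, we can keep $\beta$ disjoint from $A'$. The only subtlety is that the two ends of the arc, once pushed into the collar, might need a small homotopy (in $S_G^\circ$, rel nothing — we no longer need path-homotopy, just a loop) to be made to enter the collar cleanly and to be joined by the transverse segment without creating extra intersections with $\alpha$; this is a routine transversality/general-position argument, using that $\alpha$ is a properly embedded circle and that near $\alpha$ the surface looks like the product collar.

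Finally, for the monodromy statement: the $G$-covering $S\to S_G$ restricted to (a neighborhood of) $\beta$ is classified by the composite $\pi_1(\beta)\to\pi_1(S_G^\circ\ssm\text{(other branch points)})\to G$, and by construction $\beta$ traverses the class of $\gamma$ across $\alpha$, so the monodromy of the cover along $\beta$ equals the image in $\aut_G(P_\infty)\cong G$ of $[\gamma]$ under the map $\Pi(\hat S_G^\circ(\alpha)\ssm A';p_-,p_+)\to\aut_G(P_\infty)$. Since that map was shown (just before the lemma, by connectedness of the covering over $\hat S_G^\circ(\alpha)\ssm A'$ and standard covering theory) to be surjective, every element of $\aut_G(P_\infty)$ is realized by such a $\beta$. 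The main obstacle I anticipate is the second step — controlling the geometry near $\alpha$ well enough to turn the embedded arc in $\hat S_G^\circ(\alpha)$ into an \emph{embedded} circle in $S_G^\circ$ meeting $\alpha$ transversally in a single point, rather than in several points or non-transversally; the cleanest route is to do the arc-straightening in a hyperbolic (or just collar-adapted) metric in which $\alpha$ is geodesic and the collar is standard, so that the arc can be taken to meet $\partial(\text{collar})$ transversally and the closing-up segment is forced to be a single transverse crossing.
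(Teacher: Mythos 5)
Your overall structure matches the paper's: represent the path class by an embedded arc, then close it up across $\alpha$ to an embedded circle, then read off the monodromy from the already-established surjectivity of $\Pi(\hat S_G^\circ(\alpha)\ssm A';p_-,p_+)\to\aut_G(P_\infty)$. The second and third steps are fine (your collar description is more explicit than the paper's ``it is clear''; one should note you also need to slide the two loose ends of the arc along the collar so they have matching $\alpha$-coordinates before inserting the transverse closing segment, but that is routine).

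The gap is in the first step. You invoke an ``innermost-bigon reduction'' for a single self-intersecting arc, asserting that minimal position is embedded. But for a single arc in general position there is no guarantee that any self-intersection belongs to an innermost bigon or monogon: the little loop $\gamma|_{[s,t]}$ cut off by a self-crossing may be homotopically essential (e.g.\ run around a handle), and then there is nothing to contract. And appealing to minimal position is circular: even granting a Hass--Scott type statement that an arc with excess self-intersections admits a reducible bigon or monogon, that only brings you to minimal position --- it does not tell you the minimum is zero, which is exactly what has to be proved. The paper's argument is different and genuinely uses the fact that $p_+$ is an endpoint of the arc: look at the self-intersection $(s,t)$ with $t$ maximal; then $\gamma|_{(t,1]}$ is embedded and disjoint from $\gamma|_{[0,t)}$, so one may isotope the intersection point down this tail toward $p_+$ and then slide the crossing strand \emph{past} $p_+$ (this is a homotopy of arcs from $p_-$ to $p_+$, so it does not change the class), eliminating that self-intersection and introducing no new ones. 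This strictly decreases the self-intersection count and terminates. That ``escape past the endpoint'' move, which is unavailable for closed curves, is the essential idea you are missing.
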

\begin{proof}
We first represent the homotopy class by an immersion of the unit interval
with only transversal self-intersections. That number of self-intersections is finite and if this number is positive, we lower it  by moving  the last point of self-intersection towards $p_+$  and then slide the path over $p_+$. By iterating this procedure we obtain a representative which is an embedding. It is clear that we can make this arc lift to an embedded circle. The second assertion then follows.
\end{proof}

We choose a lift $\tilde\alpha$ of $\alpha$ and write $a\in \Hl_1(S)$ for its homology class. Since $Ra$ is an an isotropic sublattice of $\Hl_1(S)$, we have that $\la a,a\ra=0$. We let $I\subset \Hl_1(S)$ be the homology supported by the preimage of $A'$:
this is a free $R$-submodule (where as before, $R=\ZZ G$) with a generator for every connected component of $A'$. It is clear that $Ra +I$ is isotropic.
We saw that the lift $\tilde\alpha$ identifies $\aut_G(P)$ with the group $G$ with $G$ acting on itself by right translations.  
Lemma \ref{lemma:Ghit} above shows that all such elements are  obtained from a loop of the type described there. From that lemma we also derive:

\begin{corollary}\label{cor:rep}
Let $b\in I^\perp$  be such that $a \cdot b=1$ and  $ga \cdot b=0$  for $g\in G\ssm\{1\}$. Then some  $b'\in b+ R a$ can be represented by a lift of an embedded circle $\beta$ in $S^\circ_G\ssm A' $ which meets $\alpha$ transversally at a unique point (and for which necessarily  $\alpha\cdot\beta=1$). In particular, $b'$ is $R$-isotropic and the $R$-linear map defined by
\[
\Hcal^2(R)\to I^\perp, \quad \eb\mapsto a,\; \fb\mapsto b' 
\]  
is an embedding of skew-hermitian $R$-modules whose orthogonal complement supplements its image:  we obtain a basic hyperbolic summand of the $R$-module $\Hl_1(S)$.

\end{corollary}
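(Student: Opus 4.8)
The plan is to realize $b$ modulo $Ra$ by an explicit embedded circle, drawing the genuinely two-dimensional input from Lemma \ref{lemma:Ghit} and doing a homological bookkeeping for the rest; the ``in particular'' assertions then follow formally. First I would produce a base circle: apply Lemma \ref{lemma:Ghit} to the $G$-trivial class in $\Pi(\hat S_G^\circ(\alpha)\ssm A';p_-,p_+)$, getting an embedded $\beta_0\subset S_G^\circ\ssm A'$ meeting $\alpha$ transversally once with $\alpha\cdot\beta_0=1$ and with trivial monodromy, hence lifting to $S$. Let $\tilde\beta_0$ be the lift through the point of $\pi^{-1}(\beta_0\cap\alpha)$ on $\tilde\alpha$ and $b_0:=[\tilde\beta_0]$. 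Since the cover is trivial over $\alpha$ and $\beta_0$ meets $\alpha$ only there, $\tilde\beta_0$ meets $\tilde\alpha$ once and misses $g\tilde\alpha$ for $g\ne 1$, so (orienting $\beta_0$ suitably) $\la a,b_0\ra=1$; and $\tilde\beta_0$ avoids $\pi^{-1}A'$, so $b_0\in I^\perp$. Thus $b_0$ satisfies the same hypotheses as $b$, whence $v:=b-b_0\in(Ra)^\perp\cap I^\perp=(Ra+I)^\perp$.

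Next I would sweep out all of $(Ra+I)^\perp$. After fixing $\beta_0$, the remaining $G$-trivial classes in $\Pi(\hat S_G^\circ(\alpha)\ssm A';p_-,p_+)$ are the homotopy classes of arcs in the connected cover $C:=\hat S^\circ(\alpha)\ssm\pi^{-1}A'$ from the chosen lift $\tilde p_-$ of $p_-$ to the matching lift $\tilde p_+$ of $p_+$. For such an arc $\tilde\gamma$, Lemma \ref{lemma:Ghit} gives an embedded circle $\beta\subset S_G^\circ\ssm A'$ as above whose distinguished lift $\tilde\beta$ is got by closing $\tilde\gamma$ up through the component $\tilde\alpha$ of $\pi^{-1}\alpha$; hence $[\tilde\beta]$ depends only on $\tilde\gamma$ up to the $\ZZ a$ coming from the closing arc, and as $\tilde\gamma$ varies $[\tilde\beta]$ sweeps out a coset of $\operatorname{im}\!\bigl(H_1(C)\to H_1(S)\bigr)$, where a loop in $C$ based at $\tilde p_-$ is pushed into $S$ without crossing $\pi^{-1}\alpha$. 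I claim this image is exactly $(Ra+I)^\perp$: it is contained there because such loops have zero intersection with every component of $\pi^{-1}A$; and it exhausts it because $Ra+I$ is spanned by the components of the multicurve $\pi^{-1}A$, whose complement $S\ssm\pi^{-1}A$ is connected by hypothesis, so that span is saturated and the orthogonal complement of a saturated multicurve class group is represented by cycles in the complement. Since $a\in Ra\subseteq(Ra+I)^\perp$, the set of achievable classes $[\tilde\beta]$ is a single coset of $(Ra+I)^\perp$; it contains $b_0$, and $b-b_0\in(Ra+I)^\perp$, so it contains $b+\lambda a$ for some $\lambda\in R$, which produces the desired $\beta$.

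For the final assertions: $b+\lambda a=[\tilde\beta]$ is the class of an embedded circle in $S$ whose $G$-translates $g\tilde\beta$ are pairwise disjoint (as $\pi^{-1}\beta=\sqcup_g g\tilde\beta$ with $\beta$ embedded), so $\la b+\lambda a,b+\lambda a\ra=\sum_g(g^{-1}\tilde\beta\cdot\tilde\beta)e_g=0$; also $\la a,b+\lambda a\ra=\la a,b\ra-(\lambda\la a,a\ra)^\dagger=1$. Hence $\eb\mapsto a$, $\fb\mapsto b+\lambda a$ preserves the skew-hermitian form; it is injective because $\la a,b+\lambda a\ra=1$ forces $R$-independence while $Ra$ is free ($\hat S(\alpha)$ being connected); its image lies in $I^\perp$ since both $a$ and $b+\lambda a$ do; and the integral symplectic form restricts to the unimodular hyperbolic form on the $\ZZ$-span of $\{g\tilde\alpha,g\tilde\beta\}_g$, so $H_1(S)$ splits as the orthogonal direct sum of this image and its perpendicular, giving the asserted basic hyperbolic summand.

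The hard part will be the middle step. The genuinely surface-topological work (embeddedness, meeting $\alpha$ exactly once, trivial monodromy) is already packaged in Lemma \ref{lemma:Ghit}, so what remains delicate is identifying $\operatorname{im}(H_1(\hat S^\circ(\alpha)\ssm\pi^{-1}A')\to H_1(S))$ with $(Ra+I)^\perp$ — which is where the connectedness hypotheses enter, via the saturatedness of $Ra+I$ — and tracking precisely, modulo $\ZZ a$, how the homotopy class of an arc in the normalized cover determines the homology class of the lifted closed-up circle.
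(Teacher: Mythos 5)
Your proof is correct and follows essentially the same route as the paper's. The paper dispatches the key step in one sentence ("it is not difficult to see that $b$ is representable by a map from the circle to $S^\circ$ which meets the preimage of $\alpha$ exactly once ... and does not meet the preimage of $A'$") and then invokes Lemma \ref{lemma:Ghit}; your argument is exactly that step unpacked — fix a base class $b_0$ coming from a $G$-trivial arc, observe $b-b_0\in(Ra+I)^\perp$, and identify $(Ra+I)^\perp$ with the image of $H_1(S\ssm\pi^{-1}A)\to H_1(S)$ via the relative exact sequence for $(S,S\ssm\pi^{-1}A)$, which reduces representability of $b$ (up to $\ZZ a\subset Ra$) to connectedness of the complement. (A small remark: the excision/Thom argument already gives $(Ra+I)^\perp=\operatorname{im}\bigl(H_1(S\ssm\pi^{-1}A)\to H_1(S)\bigr)$ without appealing to saturation — the latter is a consequence, not an input — and your bookkeeping in fact yields $\lambda\in\ZZ\cdot 1\subset R$, slightly sharper than what the corollary asserts.)
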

\begin{proof}
It is not difficult to see that the homology class $b$ is representable by a map from the circle 
to $S^\circ$ which meets the preimage of $\alpha$ exactly once 
(hence in a point of $\tilde\alpha$) and does not meet the preimage of $A'$. 
We apply Lemma \ref{lemma:Ghit} to its image in $S^\circ_G\ssm A'$ or rather to  
the resulting arc in $\widehat S_G^\circ(\alpha)\ssm A'$ which connects $p_-$ with $p_+$: 
this then produces an embedding  $\beta$ of the circle in $S^\circ_G\ssm A'$ which meets 
$\alpha$ only once and with intersection number $1$  over which the $G$-covering is trivial. 
The  lift $\tilde\beta$ of $\beta$ which meets $\tilde\alpha$ defines a homology  class $b'$ 
which differs from by $b$ by a class supported by the preimage of $\alpha$, 
that is, an element of $Ra$. 

The proof of the last paragraph is straightforward.
\end{proof}

Let us call an ordered  pair   $a,b$ in $\Hl_1(S)$ \emph{$R$-hyperbolic} if $\la a, a\ra=\la b, b\ra =0$ and $\la a,b\ra=1$. Such a pair defines a  
basic hyperbolic summand $Ra+Rb \subset \Hl_1(S)$ and gives rise to an embedding of $\G(G)$ in $\Sp(\Hl_1(S))^G$. We shall denote the latter's image by $\G (a,b)$ and the image of $\Gamma_o(G)$ by $\Gamma_o(a,b)$. 
We write $\Bs_a$  for the set of $b\in \Hl_1(S)$ for which the pair $(a,b)$ is $R$-hyperbolic and $\Gamma_o(a)$ resp.\  $\G (a)$ for  the subgroup of $\Sp(\Hl_1(S))^G$  generated by its subgroups $\Gamma_o(a,b)$ resp.\ $\G(a,b)$ with $b\in\Bs_a$.
\\

Fix a base point $p$ on $\alpha$ and write  $\tilde p$ for its preimage in $\tilde\alpha$. 

Let $h\in G$ and regard $h$ as an element of $\aut_G(P)$.  Let $\beta$  be as in Lemma \ref{lemma:Ghit}, which we may (and will) assume to meet $\alpha$ in 
$p$ such that the lift $\tilde\beta$ of $\beta$ (as an arc) which begins in $\tilde p$ ends in $h\tilde p$.
Let   $S^{\beta}_G$ be a closed regular neighborhood
of $\alpha\cup\beta$ in $S^\circ_G\ssm A'$. This is a compact genus one surface whose boundary $\partial S^{\beta}_G$ is connected. The homotopy class of this boundary (with  its natural orientation) is in the free homotopy class of the commutator
$[\beta]^{-1} [\alpha]^{-1} [\beta][\alpha]$ (we write path composition functorially, so the order of travel is read from right to left). This commutator has trivial image in $G$ (since $[\alpha]$ has), and so the $G$-covering $S\to S_G$ is trivial over $\partial S^{\beta}_G$. The preimage of $\partial S^{\beta}_G$ in $S$ is the boundary of the 
preimage  $S^{\beta}$ of $S^{\beta}_G$ and the Dehn twist along $\partial S^{\beta}_G$ 
lifts in a $G$-equivariant manner to a multi-Dehn twist $D^\beta$  along that boundary. The following  lemma generalizes one of the constructions given in \cite{L} for  the  case when $G$ is cyclic (in that paper they are  depicted as Figs.\ 2 and 3). 

\begin{lemma}\label{lemma:multidehn} 
The multi-Dehn twist $D^\beta$ acts on $\Hl_1(S)$ as $T_a(2-e_h-e^\dagger_h)$ (where we use   formula \eqref{eqn:Tformula}, noting that $2-e_h-e^\dagger_h\in R_+$).
\end{lemma}
\begin{proof}
The lift of the commutator $[\beta]^{-1} [\alpha]^{-1} [\beta][\alpha]$ that passes through $\tilde p$ first traverses the embedded circle $\tilde\alpha$, then traverses $\tilde\beta$,  then traverses the circle $h\tilde\alpha$ in the opposite direction and then returns via the inverse of $\tilde\beta$ to $\tilde p$. 
So the homology class of this lift of the commutator (and hence of the corresponding lift of $\partial S^{\beta}_G$) is $a-ha$. If we replace $\tilde p$ by $g\tilde p$ with $g\in G$, then this replaces $a$ by $ga$ and $h$ by $ghg^{-1}$, so that  the corresponding class is $ga-gha=g(1-h)a$.  By a standard formula, the resulting action on $\Hl_1(S)$  is given by 
\begin{multline*}
\textstyle D^\beta_*(x)=x+\sum_{g\in G} (x\cdot g(1-h)a) g(1-h) a=\\
\textstyle =x+\sum_{g\in G} \Big((x\cdot ga) ga -( x\cdot gh a) ga - (x\cdot ga) gha +(x\cdot gha) gha\Big)=\\
=x+2\la x, a\ra a -\la x ,ha\ra a -\la x ,a\ra h a= T_a(2-e_h-e^\dagger_h)(x).\qedhere
\end{multline*}
\end{proof}

\begin{proposition}\label{prop:imageinclusion}
Let $b\in \Hl_1(S)$ be such that $(a,b)$ is an $R$-hyperbolic pair. Then the image  of $\diff^+(S)^G\to \Sp(\Hl_1(S))^G$ contains $\G (a,b)$. 
\end{proposition}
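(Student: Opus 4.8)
\textbf{Proof strategy for Proposition \ref{prop:imageinclusion}.}
The plan is to exhibit $\G(a,b)$ as generated by mapping classes coming from $\diff^+(S)^G$, using the multi-Dehn twists constructed in Lemma \ref{lemma:multidehn} together with the basic hyperbolic summand structure from Corollary \ref{cor:rep}. Recall that $\G(G)=\Gamma_o(G).G$ and that $\Gamma_o(G)$ is generated by $\G_+(G)=T_\eb(R_{++})$, by $\G_-(G)=T_\fb(R_{++})$, and by $\SL_2(\ZZ)\subset\EU_2(R)$. Under the embedding $j\colon\Hcal^2(R)\hookrightarrow H_1(S)$ with $\eb\mapsto a$, $\fb\mapsto b$ coming from the $R$-hyperbolic pair $(a,b)$, we must therefore realize $j_*T_a(r)$ for $r\in R_{++}$, $j_*T_b(r)$ for $r\in R_{++}$, the copy of $\SL_2(\ZZ)$ acting on $Ra+Rb$, and the right $G$-action, all by elements of the image of $\diff^+(S)^G$.

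First I would handle $T_a(R_{++})$. By definition $R_{++}$ is additively generated by the elements $e_g+e_{g^{-1}}=e_g+e_g^\dagger$ for $g\in G$ (and also $e_1+e_1=2$, which corresponds to $g=1$). Lemma \ref{lemma:multidehn} says precisely that for each $h\in G$ there is a $G$-equivariant mapping class (the multi-Dehn twist $D^\beta$, built from an embedded curve $\beta$ realizing the monodromy $h$ via Lemma \ref{lemma:Ghit}) acting on $H_1(S)$ as $T_a(2-e_h-e_h^\dagger)$. Since $2=e_1+e_1^\dagger$, taking $h=1$ gives $T_a(0)=\id$ — not useful directly — but more importantly, combining the twist for general $h$ with an ordinary Dehn twist along a lift of $\alpha$ itself (which acts as $T_a(1)$ in each $G$-sheet, hence as $T_a(\sum_g e_g)$? — here one must be careful) lets us isolate $T_a(e_h+e_h^\dagger)$. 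Concretely: a Dehn twist along $\partial$ of a genus-zero neighborhood, lifted $G$-equivariantly, yields $T_a(e_1+e_1^\dagger)=T_a(2)$, and then $T_a(2)\cdot T_a(2-e_h-e_h^\dagger)^{-1}=T_a(e_h+e_h^\dagger)$. As $h$ ranges over $G$ these generate $T_a(R_{++})$, so $j_*\G_+(G)$ lies in the image.

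Next, the right $G$-action on $\Hcal^2(R)$: this is realized by the deck transformations of the cover $S\to S_G$, which by definition lie in $\diff^+(S)^G$ (they centralize $G$ since $G$ is abelian? — no, in general they lie in $\diff^+(S)^G$ because conjugation by $G$ permutes them; one must check that the relevant normalizer elements act on $H_1(S)$ exactly as $\rho_g$), so $G\subset\G(G)$ is covered. For $\SL_2(\ZZ)$ acting on $Ra+Rb$: here $\{a,b\}$ spans a basic hyperbolic summand by Corollary \ref{cor:rep}, and both $a$ and $b$ are represented by lifts of embedded circles on $S_G^\circ$ meeting transversally once; a regular neighborhood of the union of these two circles in $S_G^\circ$ is a genus-one surface with connected boundary, and the two Dehn twists along lifts of these circles generate a copy of $\SL_2(\ZZ)$ acting on $Ra+Rb$ in the standard way while fixing the orthogonal complement (because the covering is trivial over the curves, so the twists lift $G$-equivariantly and the homology classes they move lie in $Ra\oplus Rb$). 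This handles the $\SL_2(\ZZ)$ generators and, by conjugating $T_a(R_{++})$, also $T_b(R_{++})=\G_-(G)$. Assembling these, the image of $\diff^+(S)^G$ contains all generators of $\G(G)$ transported via $j_*$, hence contains $\G(a,b)$.

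\textbf{Main obstacle.} The delicate point is the bookkeeping of \emph{which} $\ZZ G$-coefficient each lifted Dehn twist produces — i.e.\ pinning down that the multi-Dehn twist along $\partial S^\beta$ acts as $T_a(2-e_h-e_h^\dagger)$ and not some $G$-conjugate-averaged variant, and correspondingly that a plain lifted Dehn twist along $\alpha$ gives exactly $T_a(2)$ rather than $T_a(\sum_g 2e_g)$ or similar. This depends on a careful choice of the curve $\beta$ (Lemma \ref{lemma:Ghit} gives us that $\beta$ meets $\alpha$ once transversally with intersection number $1$ and is disjoint from $A'$, and that its monodromy is the prescribed $h$), on the orientation conventions for $\tilde\alpha$ and $\tilde\beta$, and on the standard formula for the action of a Dehn twist on homology — all of which Lemma \ref{lemma:multidehn} has essentially already organized. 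The remaining work is to check that as $h$ runs over $G$ the elements $e_h+e_h^\dagger$ together with $2$ really do generate $R_{++}=\ZZ G_+ \cap\{r+r^\dagger\}$ (which is immediate: $R_{++}$ is spanned by $\{e_g+e_g^\dagger : g\in G\}$), and that the deck group together with these transvections and the $\SL_2(\ZZ)$ block indeed generate all of $\G(G)$ — which is exactly the definition $\G(G)=\langle T_\eb(R_{++}),T_\fb(R_{++}),\SL_2(\ZZ)\rangle.G$.
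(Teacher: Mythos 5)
The first three pieces of your argument — the transvections $T_a(R_{++})$, the copy of $\SL_2(\ZZ)$ from mapping classes supported in a regular neighborhood of $\alpha\cup\beta$, and $T_b(R_{++})$ by conjugation — track the paper's proof reasonably closely. One small inaccuracy: the $G$-equivariant multi-Dehn twist along $\alpha$ acts on $H_1(S)$ as $x\mapsto x+\la x,a\ra a$, i.e.\ as $T_a(1)$, not $T_a(2)$; combined with $T_a(2-e_h-e_h^\dagger)$ from Lemma \ref{lemma:multidehn} this still gives $T_a(e_h+e_h^\dagger)$ and hence all of $T_a(R_{++})$, so the conclusion survives, but you should redo the bookkeeping.

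The real gap is your treatment of the $G$-factor of $\G(G)=\Gamma_o(G).G$. You propose realizing the right action $\rho_g$ on $\Hcal^2(R)$ via deck transformations of $S\to S_G$, and you half-notice the problem yourself. It is fatal. The deck group is $G$ itself, and $G\subset\diff^+(S)^G$ only when $G$ is abelian; worse, even then a deck transformation $g$ acts on $H_1(S)$ by \emph{left} multiplication, which is precisely the given $R$-module structure, whereas $\rho_g$ acts by \emph{right} multiplication on the hyperbolic summand $Ra+Rb$ while fixing $(Ra+Rb)^\perp$. Left multiplication by $g$ is not even $R$-linear when $G$ is nonabelian, so it does not lie in $\Sp(H_1(S))^G$ at all, and when $G$ is abelian it acts globally rather than only on the summand. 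No amount of fiddling with normalizer elements fixes this.

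The paper resolves it with a genuinely different construction: mapping classes of \emph{push type}. One collapses $S_G^\beta$ to a point $q$, obtains a $G$-cover $S/\!\!/S^\beta\to S_G/S_G^\beta$ unramified at $q$ with fiber $Q$, and point-pushes $q$ around a loop whose monodromy on $Q$ is right multiplication by $g$. The lifted diffeomorphism is isotopic to the identity on $S/\!\!/S^\beta$ (hence trivial on $H_1$ there), lies in $\diff^+(S)^G$ after `unlifting' to $S$, acts trivially on $H_1(S\ssm S^\beta,\partial)$, and permutes the $|G|$ components of $S^\beta$ by right multiplication by $g$ — exactly realizing $\rho_g$ on $Ra+Rb$. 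You need this (or something equivalent) in place of the deck-transformation argument.
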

\begin{proof}
We first show this for $\G_0 (a,b)$. Let $\beta$ be as in Corollary \ref{cor:rep} (and thus represent an element of $b+Ra$). 
The diffeomorphisms of $S_G$ with support in the interior of $S^\beta_G$  have as their image in the mapping class group
of $S_G$ a  centrally extended copy of $\SL(2, \ZZ)$ with the central subgroup generated by the Dehn twist along the boundary of $S^\beta_G$.  This Dehn twist acts trivially on $\Hl_1(S^\beta_G)$.  These diffeomorphisms lift to diffeomorphisms of $S$ with support in $S^\beta$  with  the central subgroup acting trivially on $\Hl_1(S)$. 
We thus obtain in the image of $\diff^+(S)^G\to \Sp(\Hl_1(S))^G$ a copy of $\SL_2(\ZZ)$. 

The multi-Dehn twist  associated to $\alpha$ acts on  $\Hl_1(S)$ as $x\mapsto x+\sum_{g\in G} (x\cdot ga) ga =x+\la x, a\ra a=T_a(1)(x)$. By lemma \ref{lemma:multidehn} the image of $\diff^+(S)^G\to \Sp(\Hl_1(S))^G$ also contains the transvections $T_a(2-e_h -e_h^\dagger)$ for all $h\in G$. Hence 
that image contains all of $T_a(R_{++})$. This proves that the image  of $\diff^+(S)^G\to \Sp(\Hl_1(S))^G$ contains $\G_o(a,b)$. 

So it remains to show that the image of $G$ in $\G(a,b)$  is realized by $\diff^+(S)^G$. For this we use mapping classes of \emph{push type}.
Consider the smooth surface  $S_G/S_G^\beta$  that is  obtained as a quotient of $S_G$ by contracting  $S_G^\beta$  to a point (that we shall call $q$).  If we do the same for the connected components of $S^\beta$ in $G$ we get a $G$-cover $S/\!\!/S^\beta\to S_G/S_G^\beta$ fitting  in the commutative diagram
\[
\begin{CD}
S  @>>> S/\!\!/S^\beta\\
@V{G}VV @V{G}VV\\
S_G @>>> S_G/S^\beta_G
\end{CD}
\]
The covering on the right does not branch over $q$ and so its preimage $Q$ in $S/\!\!/S^\beta$ is a regular $G$-orbit. For every $g \in G$, there is a closed 
loop $\gamma$ of $S_G/S^\beta_G$ based at $q$ which avoids branch points and induces in $Q$ right multiplication by $g$. The corresponding point-pushing map on $S_G/S^\beta_G$ (chosen to fix branch points) lifts to a $G$-equivariant diffeomorphism  $\varphi$ of
$S/\!\!/S^\beta$ that extends this permutation. 
\begin{figure}
\centerline{\includegraphics[scale=0.200]{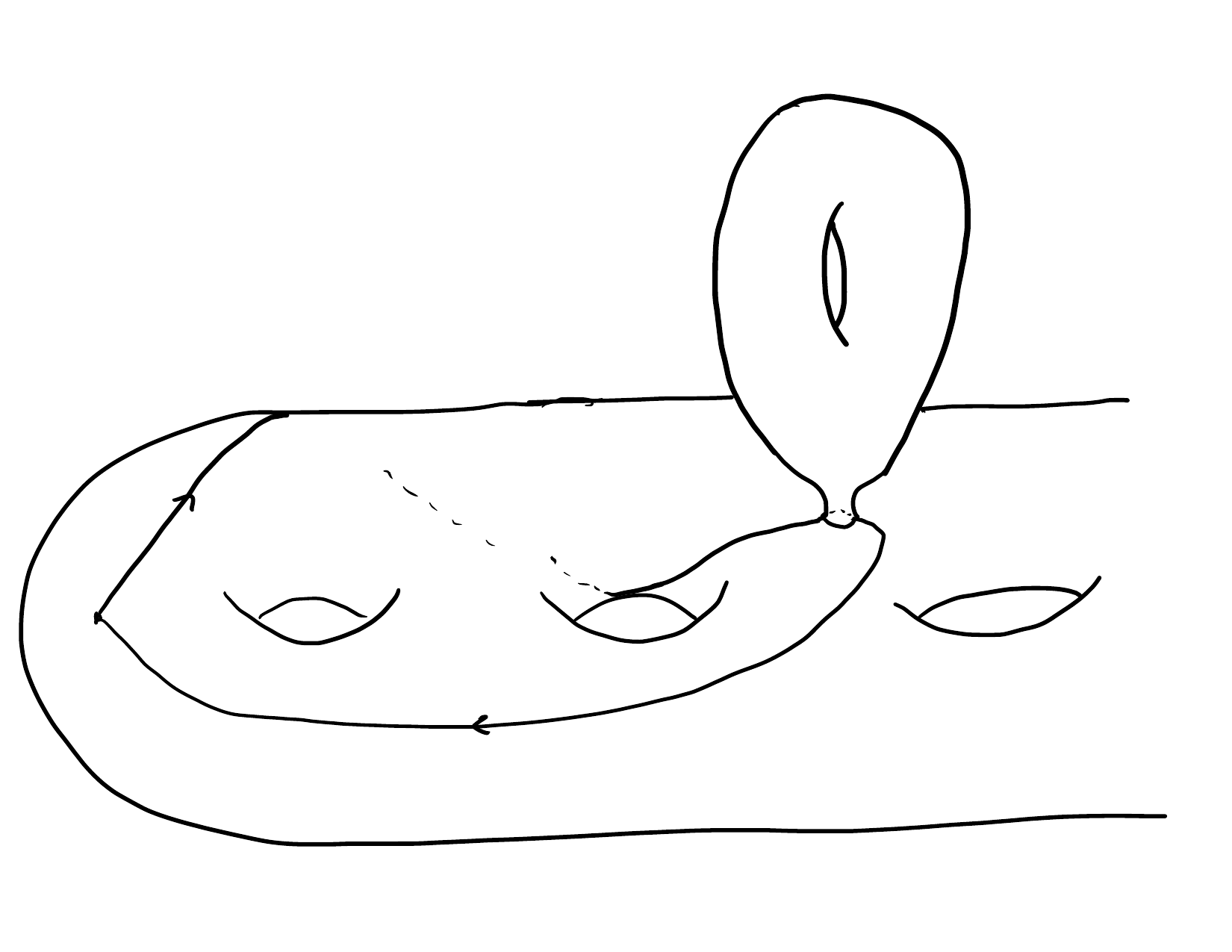}}
\caption{\footnotesize  Point pushing (or rather, `small circle pushing') the  genus one surface $S^\beta$ along $S_G/S^\beta_G$.}
\label{fig:onthemove}
\end{figure}

Such a  point-pushing map  is isotopic to the identity on $S_G/S^\beta_G$ and hence the same is true for its lift $\varphi$. In particular,  $\varphi$ acts trivially on  $\Hl_1(S/\!\!/S^\beta)$. It is not difficult to see that  the point pushing map  and its lift $\phi$ can be `lifted' to $S$ and $S_G$ by `small circle pushing'. Since  $\Hl_1(S\ssm S^\beta, \partial(S\ssm S^\beta))\to  \Hl^1(S/\!\!/S^\beta)$ is an isomorphism, the action on 
$\Hl_1(S\ssm S^\beta, \partial(S\ssm S^\beta))$ will be trivial. Clearly the components of $S_\beta$ will be permuted according to the right action of $g$ and thus realizes $g\in\G(a,b)$ in the image of $\diff^+(S)^G$.
\end{proof}

Part (ii) of the corollary below  establishes the Putman-Wieland property of Theorem \ref{thm:main}.

\begin{corollary}[Hyperbolic generation]\label{cor:hyperbolicgen}
The following properties hold:
\begin{enumerate}
\item[(i)] the subset $\{a\}\cup \Bs_a$ of $\Hl_1(S; \QQ)$ spans the latter over $\QQ G$,
\item[(ii)] the subgroup $\Gamma_o(a)$ of $\Sp  (\Hl_1(S))$ generated by the subgroups $\Gamma_o(a,b)$ with $b\in \Bs_a$ (and hence $\diff^+(S)^G$) has no nonzero finite orbit in $\Hl_1(S)$.
\end{enumerate}
\end{corollary}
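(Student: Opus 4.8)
The plan is to deduce both statements from Proposition~\ref{prop:imageinclusion} together with the geometric input in Lemma~\ref{lemma:Ghit} and Corollary~\ref{cor:rep}. For part (i), I would argue that the $R$-submodule $N\subset H_1(S;\QQ)$ spanned by $\{a\}\cup\Bs_a$ is all of $H_1(S;\QQ)$. First note that $N\supset Ra$, and that for every $b\in I^\perp$ with $a\cdot b=1$ and $ga\cdot b=0$ for $g\neq 1$, Corollary~\ref{cor:rep} produces $\lambda\in R$ with $b+\lambda a\in\Bs_a$, hence $b\in N$. The set of such $b$ spans $(Ra)^\perp\cap I^\perp$ over $\QQ$ modulo $Ra$ --- this is where I would use that the intersection form on $H_1(S;\QQ)$ is nondegenerate and $G$-invariant, so that the orthogonal of the isotropic module $Ra+I$ surjects onto enough of $H_1(S;\QQ)$; more precisely one chooses a complementary hyperbolic pair for $a$ inside the $G$-stable complement of $I$ and uses the regularity of the $G$-orbit of $\tilde\alpha$ to arrange the vanishing conditions $ga\cdot b=0$. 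Since $a$ together with $(Ra)^\perp$ already spans $H_1(S;\QQ)$ over $\QQ$ (again by nondegeneracy), and $I$ itself is spanned by classes of the form $g\cdot(\text{component of }A')$ which can likewise be hit after adding a multiple of $a$, I would conclude $N=H_1(S;\QQ)$.

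For part (ii), I would argue by contradiction. Suppose $v\in H_1(S)\ssm\{0\}$ has finite $\Gamma_o(a)$-orbit; then the $\QQ$-span $W$ of that orbit is a nonzero finite-dimensional $\QQ$-vector space on which $\Gamma_o(a)$ acts through a finite quotient, so in particular every $\Gamma_o(a,b)$ with $b\in\Bs_a$ acts on $W$ through a finite group. But $\Gamma_o(a,b)$ is the image of $\Gamma_o(G)$, which by Proposition~\ref{prop:arithm1} (applied factor by factor over $X(\QQ G)$) is an arithmetic, hence Zariski-dense, subgroup of $\EU_2(\QQ G)=\prod_\chi\EU_2(V_\chi^\dagger)$ acting $\QQ$-irreducibly on each $\Hcal^2(V_\chi^\dagger)$; a group whose image in $\GL(W)$ is finite must act trivially on the subspace $W\cap(Ra+Rb)\cdot(\text{span})$... more cleanly: since $\Gamma_o(a,b)$ contains the transvections $T_a(R_{++})$ and $T_{b'}(R_{++})$ for $b'$ a hyperbolic partner, and a transvection of infinite order cannot have finite image on any vector it moves, $W$ must be pointwise fixed by $T_a(R_{++})$, i.e. $\la w,a\ra=0$ for all $w\in W$. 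Running the same argument over all $b\in\Bs_a$ shows $\la w,b\ra=0$ for all $w\in W$ and all $b\in\Bs_a$ --- here one uses that $T_{b+\lambda a}(R_{++})\subset\Gamma_o(a,b)$ also acts with finite image on $W$, forcing $\la w,b+\lambda a\ra=0$, and combined with $\la w,a\ra=0$ this gives $\la w,b\ra=0$. Since the $R$-module pairing $\la\;,\;\ra$ on $H_1(S;\QQ)$ is nondegenerate and, by part (i), $\{a\}\cup\Bs_a$ spans $H_1(S;\QQ)$ over $\QQ G$, we conclude $W=0$, a contradiction. Finally, since by Proposition~\ref{prop:imageinclusion} each $\Gamma_o(a,b)$ --- indeed each $\G(a,b)$ --- lies in the image of $\diff^+(S)^G\to\Sp(H_1(S))^G$, the same conclusion holds for that image, which is the Putman--Wieland property of Theorem~\ref{thm:main}(i).

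I expect the main obstacle to be the spanning argument in part~(i): one must produce, for a \emph{spanning} set of classes $b\in H_1(S;\QQ)$, representatives satisfying the precise numerical conditions $a\cdot b=1$ and $ga\cdot b=0$ for $g\neq 1$ that feed into Corollary~\ref{cor:rep}. This requires exploiting the regularity of the $G$-orbit $P_\infty$ (equivalently, that the covering stays connected away from $A$) to decouple the class $a$ from its $G$-translates in the intersection pairing, and a small amount of linear algebra over $\QQ G$ to see that such $b$, modulo $Ra$, still span $H_1(S;\QQ)/(\text{radical contributions})$; the submodule $I$ supported on $A'$ needs separate, but entirely analogous, treatment. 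The implication (i)$\Rightarrow$(ii) is then a formal consequence of nondegeneracy of the skew-hermitian form together with the fact that infinite-order transvections act with infinite orbits on the vectors they move.
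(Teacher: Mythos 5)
Your plan for part (ii) is essentially sound and close in spirit to the paper's, but your plan for part (i) has a genuine gap, and since (ii) leans on (i) the proposal as a whole is incomplete.

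On part (i): you acknowledge that the "main obstacle" is producing, for a spanning set of classes $b$, representatives satisfying $a\cdot b=1$ and $ga\cdot b=0$ for $g\ne 1$, and you sketch only that this "requires exploiting the regularity of the $G$-orbit... and a small amount of linear algebra over $\QQ G$", with the $I$-part getting "separate but analogous treatment". That is precisely the part you have not done, and doing it directly is awkward. The paper sidesteps it entirely with a dual argument you miss: take $c\in H_1(S;\QQ)$ perpendicular to $\{a\}\cup\Bs_a$ and show $c$ is perpendicular to \emph{every} $x\in H_1(S)$. Fix any $b\in\Bs_a$ and, given $x$, set $x':=(1+\la x,a\ra)b+x$. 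A short computation using $\la a,b\ra=1$ and $\la a,x\ra=-\la x,a\ra^\dagger$ gives $\la a,x'\ra=1$, i.e.\ exactly the numerical condition needed for Corollary~\ref{cor:rep}; that corollary yields $\lambda\in R$ with $x'':=x'+\lambda a\in\Bs_a$, and then $0=\la x'',c\ra=\la x',c\ra=\la x,c\ra$ because $c\perp a$ and $c\perp b$. Nondegeneracy of the intersection form now forces $c=0$. This is the key trick; without it your direct spanning approach is not a proof.

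On part (ii): your idea — that infinite-order transvections in $\Gamma_o(a,b)$ cannot act with finite image on any $w$ they move, so $\la w,a\ra=0$ and $\la w,b\ra=0$, and then (i) plus nondegeneracy kills $W$ — is correct in outline. Two small imprecisions: what is forced is that a \emph{finite-index sublattice} of $T_a(R_{++})$ fixes $W$ pointwise, not all of it; and passing from $\la w,a\ra r a=0$ to $\la w,a\ra=0$ requires varying $r$ over that sublattice and using $1\in\QQ R_{++}$, because $\ZZ G$ has zero divisors. The paper's own argument is shorter: since $\Gamma_o(a,b)$ acts $\QQ$-irreducibly on $Ra+Rb$ (Proposition~\ref{prop:arithm1}) and trivially on $(Ra+Rb)^\perp$, one has $H_1(S)^{\Gamma_o(a,b)}=(Ra+Rb)^\perp$, and the same holds for any finite-index subgroup; intersecting over $b\in\Bs_a$ and invoking (i) gives $H_1(S)^\G=0$ directly, with no transvection bookkeeping.
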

\begin{proof}
Let $c\in \Hl_1(S; \QQ)$ be perpendicular to $\{a\}\cup \Bs_a$. We prove that $c$ is then perpendicular to 
every  $x\in \Hl_1(S)$; since the intersection form is nondegenerate, this will imply that  $c=0$ and hence that  the $\ZZ G$-submodule of $\Hl_1(S)$ generated $\{a\}\cup \Bs_a$ is of finite index.  To this end, let $b\in \Bs_a$. Then $x':=(1+\la x,a\ra)b+ x$ has the property that $\la a,x'\ra=1$. By Corollary \ref{cor:rep} is   $(a,x'')$ is a hyperbolic pair for some  $x''\in x'+ R a$, so that  $0=\la x'',c\ra=\la x,c\ra$.

For (ii) it suffices to show that for every finite index subgroup $\G\subset \Gamma_o(a)$,
the fixed part $\Hl_1(S)^\G$ is trivial.
Note that  $\Hl_1(S)^{\Gamma_o(a,b)}$ is the perp of $Ra+Rb$ in $\Hl_1(S)$ with respect to the intersection pairing.
The $\Gamma_o(a,b)$-invariant part of  $\Hl_1(S)$ is not changed if we replace $\Gamma_o(a,b)$ by the finite index subgroup $\G\cap \Gamma_o(a,b)$ and hence  $\Hl_1(S)^{\G}$ is  perpendicular  to $Ra+Rb$. As this is true for all $b\in \Bs_a$ and $\{a\}\cup \Bs_a$ generates $\Hl_1(S)$ as an $R$-module, it follows that  $\Hl_1(S)^{\G}$ must be trivial.
\end{proof}

We can now finish  the proof of our main theorem.
\begin{proof}[Proof of the main theorem \ref{thm:main}]
Let us denote the image of $\diff^+(M)^G$  in $\Sp(\Hl_1(S; \QQ))^G$ by $\G$  and  the image of the latter
in the factor $\U(\Hl_1(S)[\chi])$ of  $\Sp(\Hl_1(S; \QQ))^G=\prod_\chi \U(\Hl_1(S)[\chi])$ by
$\G_\chi$. By combining Corollary \ref{cor:hyperbolicgen} with Theorem \ref{thm:transvectiongeneration}, 
we see that under the assumptions of (ii), $\G_\chi$ is an arithmetic subgroup of $\U(\Hl_1(S)[\chi])$.

Note that  $\G^\chi:=\G\cap \EU(\Hl_1(S)[\chi])$ is a normal subgroup of $\G_\chi$. 
It remains to see that
$\G^\chi$ is of finite index in $\G_\chi$.  Since $\EU(\Hl_1(S)[\chi])$ is almost simple  and of real rank $\ge 2$,   
it follows from a general result of Margulis  (\cite{margulis}, Assertion (A), Ch.~VIII) that this is the case unless $\G^\chi$ meets 
$\EU(\Hl_1(S)[\chi])$ in the center. But Proposition \ref{prop:imageinclusion} shows that $\G^\chi$ contains a subgroup isomorphic to 
 $\Gamma_o(\chi)$ and so this last possibility does not occur. 
\end{proof}

\begin{remark}\label{rem:exceptions}
This argument shows that if we are in the setting of (i) (triviality of the cover over a genus one subsurface of $S_G$), then $\G_\chi$ is an arithmetic subgroup of $\U(\Hl_1(S)[\chi])$, unless  $V_\chi^\dagger\cong D_\chi$ and the image of $G$ in $V_\chi$ is of the type given in Proposition \ref{prop:arithm1}. Denote that image by $G_\chi$ and let $G^\chi$ stand for the kernel of $G\to G_\chi$. Since $\Hl_1(S)[\chi]$  already arises on the $G_\chi$-cover $S_{G^\chi}\to S_G$ in the sense that  $\Hl_1(S)[\chi]=\Hl_1(S_{G^\chi})[\chi]$, we may  for the arithmeticity  question just as well focus  on this  intermediate cover. 

In the cases (i) and (ii) of \ref{prop:arithm1}, so when $D_\chi$ equals $\QQ$, the Gaussian field or the Eisenstein field, then $G_\chi$ is a group of roots of unity and hence cyclic.  When the genus of $S_G$ is at least $3$, we can always find a closed subsurface of genus 2  over which the covering $S_{G^\chi}\to S_G$ is trivial and so  $\G_\chi$ is then arithmetic.  In the remaining cases, $G_\chi$ is a particular kind of Kleinian group. It might well be that  a $G_\chi$-cover is then also trivial  over the complement of a genus two subsurface of the quotient surface when the genus of the latter is  $\ge 3$. If true, then it would follow that $\G$ would always be arithmetic if the genus of $S_G$ is at least $3$ and the covering  is trivial over a genus one subsurface of $S_G$.
\end{remark}

\end{document}